\definecolor{marin}{rgb}   {0.,   0.3,   0.7} 
\definecolor{rouge}{rgb}   {0.8,   0.,   0.} 
\definecolor{sepia}{rgb}   {0.8,   0.5,   0.} 
\newcommand\N{\mathbb{N}}
\newcommand\T{\mathbb{T}}
\newcommand\Z{\mathbb{Z}}
\newcommand\R{\mathbb{R}}
\newcommand\C{\mathbb{C}}
\newcommand{\Uc}{\mathcal{U}}
\newcommand{\Zc}{\mathcal{Z}}
\newcommand{\Hc}{\mathcal{H}}
\newcommand{\Bc}{\mathcal{B}}
\newcommand{\Pc}{\mathcal{P}}
\newcommand{\dd}{\mathrm{d}}
\newcommand{\enstq}[2]{\left\{#1~\middle|~#2\right\}}
\newcommand\eps{\varepsilon}
\newcommand{\Norm}[2]{\|#1\|\left.\vphantom{T_{j_0}^0}\!\!\right._{#2}}  
\newtheorem{theorem}{Theorem}
\newtheorem{lemma}{Lemma}
\newtheorem{remark}{Remark}
\title{Around plane waves solutions of the Schr\"odinger-Langevin equation}
\author{Quentin Chauleur}
\address{INRIA Rennes, Univ Rennes \& Institut de Recherche Math\'ematiques de Rennes,
CNRS UMR 6625 Rennes, Campus Beaulieu F-35042 Rennes Cedex, France. }
\email{Quentin.Chauleur@univ-rennes1.fr}
\author{Erwan Faou}
\address{INRIA Rennes, Univ Rennes \& Institut de Recherche Math\'ematiques de Rennes,
CNRS UMR 6625 Rennes, Campus Beaulieu F-35042 Rennes Cedex, France. }
\email{Erwan.Faou@inria.fr}
\begin{document}

\maketitle
\begin{abstract}
We consider the logarithmic Schr\"odinger equations with damping, also called Schr\"odinger-Langevin equation. On a periodic domain, this equation possesses plane wave solutions that are explicit. We prove that these solutions are asymptotically stable in Sobolev regularity. In the case without damping, we prove that for almost all value of the nonlinear parameter, these solutions are stable in high Sobolev regularity for arbitrary long times when the solution is close to a plane wave.  We also show and discuss numerical experiments illustrating our results. 
\end{abstract}


\section{Introduction}
Let $\T^d=\R^d/( 2 \pi \Z)^d $ denote the $d$-dimensional torus ($d \in \N^*$). We consider the logarithmic Schr\"odinger equation with damping, also called Schr\"odinger-Langevin equation,
\begin{equation}  i  \partial_t \psi +  \Delta \psi = \lambda \psi \log(|\psi|^2) + \frac{\mu}{2i}  \psi \log\left( \frac{ \psi}{\psi^*} \right),  \label{SL_eq} \end{equation} 
with $t\geq 0$, $x \in \T^d$, $\psi(0,x)=\psi^0(x)$, $\lambda <0$ (focusing case) or $\lambda > 0$ (defocusing case), and $\mu \geq 0$. We also denote $\psi^*$ (or also $\overline{\psi}$) the complex conjugate of a complex function $\psi$. Note that when $\mu = 0$, this equation is the 
logarithmic Schr\"odinger equation
\begin{equation} \label{logNLS}
 i \partial_t \psi + \Delta \psi = \lambda \psi \log |\psi|^2. 
\end{equation}
This latter equation was introduced in \cite{birula1976} as a model of nonlinear wave mechanics, and has then be proposed to model various phenomena such as quantum optics \cite{buljan2003}, \cite{krolikowski2000}, nuclear physics \cite{hefter1985} or transport and diffusion phenomena \cite{martino2007}, \cite{hansson2009}. On the other hand, the Schr\"odinger-Langevin equation \eqref{SL_eq} first appears in \cite{nassar1985} as a possible way to give a stochastic interpretation of quantum mechanics in the context of Bohmian mechanics. It had a recent renewed interest in the physics community, in particular in quantum mechanics in order to describe the continuous measurement of the position of a quantum particle (see for example \cite{nassar}, \cite{zander} or \cite{mousavi2019}) and in cosmology and statistical mechanics (see \cite{chavanis2017}, \cite{chavanis2019cosmo} or \cite{chavanis2019stat}). 

A lot of properties of these two equations are already known on the whole space $\R^d$. The mathematical study of equation \eqref{logNLS} for the focusing case $\lambda <0$ goes back to \cite{cazenave1983}, and the global existence of solutions is now well understood (see \cite{cazenave} and \cite{davenia2014}), as well as their qualitative behaviors (see for instance \cite{ardila2016}, \cite{ferriere2019} or \cite{ferriere2021multisoliton}). On the other hand, the defocusing case $\lambda > 0$ for \eqref{logNLS} has received some recent attention, in particular from the work \cite{carles2018} which established the global existence and the uniqueness of solutions as well as their asymptotic behavior. For the Schr\"odinger-Langevin equation \eqref{SL_eq}, the long-time behavior of solutions is given in \cite{chauleur2020} under some global existence assumptions.

However, few results are known about these equations on the $d$-dimensional torus geometry. The existence and uniqueness of global weak solutions to the logarithmic Schr\"odinger equation \eqref{logNLS} are given in \cite{carles_bao_DF}, and the existence of global dissipative solutions to the Euler-Langevin-Korteweg equations, which is the fluid counterpart of the Schr\"odinger-Langevin equation through the Madelung transform $\psi=\sqrt{\rho}e^{iS}$, is established in \cite{chauleur2021global}. As no behaviour properties or asymptotic features are currently known up to the authors knowledge, this paper is a first step in order to give some qualitative description of the solutions of these equations on $\T^d$. 

The most striking property of \eqref{SL_eq} is the preservation of the $L^2$ norm even in the damped case $\mu > 0$, hence this equation is always conservative in this sense. On $\T^d$, a first natural question is to analyze the existence and stability of plane waves solutions. They are of the form
\begin{equation} \label{general_plane_wave}
 \nu_{m}(t,x)=\rho e^{i m \cdot x - \omega t}   
\end{equation}
for $\rho >0$, $m \in \Z^d$ and $\omega \in \R$, they belong to any Sobolev space on the torus, and they usually play the role of ground states for nonlinear equations on $\T^d$. For instance, in the case of the classical cubic nonlinear Schr\"odinger
\begin{equation} \label{NLS}
	i \partial \psi + \Delta \psi = \lambda |\psi|^2 \psi,
\end{equation}
plane-wave solutions \eqref{general_plane_wave} correspond to $\omega= |m|^2 + \lambda \rho^2$, and 
it has been shown in  \cite{faou2012} that these solutions are stable in large Sobolev spaces $H^s$: to be more precise, small perturbations of \eqref{general_plane_wave} in $H^s$ remain essentially localized in the $m$-th Fourier mode over very long times in $H^s$ for sufficiently large Sobolev exponent $s$, and {\em almost all} $\rho$ which corresponds here to the $L^2$ norm of the plane wave\footnote{For the orbital stability in the energy space $H^1$, results can also be found in Zhidkov \cite[Sect.~3.3]{Zhidkov2001}  
and Gallay \& Haragus \cite{GH1,GH2}.}. 

In the case of the logarithmic Schr\"odinger equation \eqref{logNLS}, for initial data made of a single Fourier mode $u(0,x)=\rho e^{im \cdot  x}$ (with $\rho >0$ and $m \in \Z^d$), the equation has the unique plane-wave solution $\nu_m(t,x)= \rho e^{i(m \cdot x - \omega t)  }$ localized at the $m$-th Fourier mode, with 
\[\omega= |m|^2+2\lambda \log \rho. \]
As performed in \cite{faou2012}, a natural question is then to study the stability of these solutions in Sobolev spaces other very long time. Indeed, we state such a result in Theorem \ref{theorem_plane_wave}, and give a proof based on normal transformations as in \cite{grebert2007,bambusi2006}. The main difference with the result for \eqref{NLS} is that the frequencies of the linearized operator do not depend on $\rho$, by a subtle mechanism of scaling invariance of \eqref{logNLS}. This result is thus obtain for {\em almost all} $\lambda$, to avoid resonances in the equation. 

In contrast, the damping effect in the Schr\"odinger-Langevin induces that the only stationary plane wave solutions of \eqref{SL_eq} are the constant plane wave functions of the form
\begin{equation} \label{constant_SL}
 \nu= \rho e^{-2i \lambda \log \rho/\mu },   
 \end{equation}
where $\rho >0$.  We study the dynamics of perturbations of such a solution in Theorem~\ref{theorem_plane_wave_SL}. We will show that every small perturbations of (complex) constants \eqref{constant_SL} converges exponentially fast in $H^s(\T^d)$ with $s > \frac{d}2$ towards the constant solution \eqref{constant_SL} where $\rho$ denotes the $L^2$ norm of the solution, which is preserved by the dynamics. The rate of convergence depends on arithmetic relations between $\lambda$ and $\mu$ which can also generate Jordan block dynamics. Surprisingly, we also have situations where for given $\lambda$, the relaxation is more slow for large $\mu$ than for small $\mu$, and where the damping rate depend on the modes. This is exemplified by numerical experiments and proved in detail in Section 3.   

This paper is organized as follows. In Section 2, we will recall some notations and properties of functional analysis and nonlinear PDE analysis in order to state our results  Theorem \ref{theorem_plane_wave_SL} and Theorem \ref{theorem_plane_wave}. Then, Section 3 is dedicated to the proof of Theorem \ref{theorem_plane_wave_SL}, and Section 4 to the proof of Theorem \ref{theorem_plane_wave}. Finally, in Section 5, we show some numerical simulations in order to illustrate our results and explore situations not covered by our analysis. 

\section{Algebraic context and main results}
With a periodic function $u \in L^2(\T^d)$ we can associate the Fourier coefficients $u_n$ for $n = (n_1,\ldots,n_d) \in \Z^d$ defined by
\[ u_n= \frac{1}{(2\pi)^d}\int_{\T^d} u(x) e^{-in \cdot x} \dd x, \]
with $n\cdot x = n_1 x_1 + \cdots n_d x_d$, $x = (x_1,\ldots,x_d)$. For the average, we will use the specific notation 
$$
\langle u \rangle = u_0 = \frac{1}{(2\pi)^d} \int_{\T^d} u(x) \dd x. 
$$
We define the Sobolev space $H^s(\T^d)$ associated with the norm
\[ \Norm{u }{H^s}= \left(\sum_{n \in \Z^d }  \left(1+ |n|^2 \right)^{s} |u_n |^2 \right)^{\frac{1}{2}},  \]
where $|n|^2 = n_1^2 + \cdots + n_d^2$ for $n = (n_1,\ldots,n_d) \in \Z^d$. 
We recall that $H^s(\T^d)$ is an algebra when $s>d/2$, namely there exists a constant $C_s$ such that for all $u$, $v\in H^s$, we have 
\begin{equation}
\label{algebra}
\Norm{u v}{H^s} \leq C_s \Norm{u}{H^s} \Norm{v}{H^s}.
\end{equation}
Note also that we have 
$$
\Norm{u}{L^2}^2 = \sum_{n\in \Z^d}|u_n|^2 = \frac{1}{(2\pi)^d} \int_{\T^d} |u(x)|^2 \dd x = (u,u)_{L^2},
$$
where 
$$
(u,v)_{L^2} = \frac{1}{(2 \pi)^d} \int_{\T^d} \overline{u(x)}{v(x)} \dd x. 
$$
We now define the notion of solution to the Schr\"odinger-Langevin equation. For $T > 0$, 
and an application $t \mapsto u(t,x) \in \mathcal{C}([0,T], H^s)$ such that $\langle u \rangle \neq 0$, by classical lifting theorem, we can define $a(t) > 0$ and $\theta(t) \in \R$ such that $\langle u(t) \rangle = a(t) e^{i \theta(t)}$, and such that the application $t \mapsto (a(t),\theta(t))$ is continuous on $[0,T]$.
We define the function 
$$
w =  a - e^{-i \theta} u = |\langle u \rangle | \left( 1 - \frac{u}{\langle u \rangle}\right),
$$
so that we have the following parametrization, valid for all $u$ such that $\langle u \rangle \neq 0$: 
\begin{equation}
\label{proj}
u(t,x) = e^{i \theta(t)} (a(t) + w(t,x)), 
\end{equation}
where $\theta(t)$, $a(t)$ and $w(t,x)$ are continuous in time, as long as $\langle u \rangle \neq 0$. 
In this case, we can define the logarithm
$$
\log(u (t,x) ) :=  i \theta(t) + \log a(t) + \log \left(1 + \frac{w(t,x)}{a(t)}\right). 
$$
This application is well defined and smooth for curves on the domain 
\begin{multline*}
\mathcal{U}_s =  \enstq{u = e^{i \theta}(a + w)}{(a,\theta,w) \in \R_+ \times \T \times H^s,\,  a > 0, \, \langle w \rangle = 0, \quad \Big\|\frac{w}{a}\Big\|_{H^s} <  \frac{1}{C_s} }
\end{multline*}
where $C_s$ is the constant appearing in \eqref{algebra}. Note that this set contains arbitrary large functions as both $a$ and $w$ can become large. 

On $\mathcal{U}_s$, and owing to the analytic series 
\begin{equation}
\label{lelog}
\log \left(1 + \frac{w}{a}\right) =  - \sum_{n \geq 1} \frac{1}{n}\left(- \frac{w}{a }\right)^{n},
\end{equation}
we see that the application $u \mapsto \log u$ defined above is analytic. 
With this definition of the logarithm, it is clear that for a curve $t \mapsto u(t) \in \Uc_s$ we have $u^*(t) \in \Uc_s$, and $\log |u(t)|^2 = \log u(t) + \log u^*(t)$. Moreover, we have $\log(u^*(t)) = (\log u(t))^*$. 
%
%
%
Hence we deduce that  $u(t)  \mapsto \lambda u (t)\log |u(t)|^2 = \lambda u(t) (\log u(t) + \log u^*(t))$ is well defined on $\mathcal{C}([0,T],\Uc_s)$. Similarly, the function $u(t) \mapsto \log(u(t)) - \log(u^*(t))$ is well defined on $\mathcal{C}([0,T],\Uc_s)$, and hence so is the function \[u\in\mathcal{C}([0,T],\Uc_s)\mapsto \frac{\mu}{2i}  u \log\left( \frac{ u}{u^*} \right) \in \mathcal{C}([0,T],H^s).\]
This thus allows to define mild and strong solutions to the Schr\"odinger-Langevin equation \eqref{SL_eq} than can be written 
\begin{equation}  
i  \partial_t \psi +  \Delta \psi = z \psi \log(\psi) + z^* \psi \log\left( \psi^*  \right), \quad \mbox{with} \quad z = \lambda + \frac{\mu}{2i}.\label{SL_eq2} \end{equation} 

Note that the $L^2$ norm is preserved along the dynamics of this equation, as we can check that 
$$
\frac{\dd}{\dd t} \Norm{\psi}{L^2}^2 = 2 \mathrm{Re} ( \psi, \partial_t \psi)_{L^2}
= - 2 \, \mathrm{Im}  ( \psi, \Delta \psi)_{L^2}  + 4\, \mathrm{Im}  (\psi, \psi  \, \mathrm{Re}(z \log \psi) )_{L^2} = 0. 
$$

We now state our first theorem concerning the Schr\"odinger-Langevin equation: 

\begin{theorem} \label{theorem_plane_wave_SL}
Let $s > \frac{d}{2}$,  $\lambda > -\frac{1}{2}$, $\mu > 0$ and $\rho > 0$. Then 
there exists $\eps_0>0$ such that, if the initial datum $\psi^0 \in \mathcal{U}_s$ satisfies
$$
\Norm{\psi^0 - \langle \psi^0\rangle}{H^s} \leq \varepsilon_0 \quad \mbox{and}\quad \Norm{\psi_0}{L^2}  = \rho
$$
then the solution of \eqref{SL_eq} with $\psi(0,.)=\psi^0 \in \Uc_s$ satisfies for $t \geq 0$, 
\begin{equation}
\label{rateSL} \Norm{  \psi(t,.) - \rho e^{-2i \lambda \log \rho/\mu }}{H^s} \leq C e^{- \alpha t}(1 + \beta t), 
\end{equation}
where the constant $C>0$ depends on $s$, $d$, $\rho$ and $\varepsilon_0$, and where $\alpha >0 $  and $\beta \in \{0,1\}$ depend on $\mu$ and $\lambda$ as follows: 
\begin{itemize}
\item[(i)]  If $\mu < 2 \sqrt{1+2 \lambda}$ then $\alpha = \frac{\mu}{2}$ and $\beta = 0$. 
\item[(ii)] If $\mu = 2 \sqrt{1 + 2 \lambda}$ then $\alpha = \frac{\mu}{2}$ and $\beta = 1$. \item[(iii)] If $\mu > 2 \sqrt{1+2 \lambda}$ then 
$$
\alpha = \frac{\mu}{2} - \Big(\frac{\mu^2}{4}- 1- 2 \lambda  \Big)^{\frac12} \in \left(0,\frac{\mu}{2}\right). 
$$
and if there exists $n \geq 2$ such that $\mu^2 = 4n^2 + 8 \lambda n$ we have  $\beta =1$, and if this is not the case, $\beta = 0$. 

\end{itemize}
Moreover, for any fixed $j \in \Z^d\backslash\{0\}$, we have 
\begin{equation}
\label{psij}
|\psi_j(t) | \leq \frac{C}{j^s} e^{- \alpha_j t}( 1 + \beta_j t), 
\end{equation}
where 
$$
\alpha_j =   \frac{\mu}{2} -    \max\big(0, \frac{\mu^2}{4} - |j|^4- 2 \lambda |j|^2 \big)^{\frac12}
$$
and $\beta_j = 0$ unless $\frac{\mu^2}{4} - |j|^4- 2 \lambda |j|^2 = 0$ and in this case $\beta_j = 1$. 
\end{theorem}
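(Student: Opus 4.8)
The plan is to pass to the parametrization \eqref{proj}, $\psi = e^{i\theta}(a+w)$ with $\langle w\rangle = 0$ and $a > 0$, and to track the evolution of the triple $(\theta,a,w)$. Substituting into \eqref{SL_eq2} and using that $z\log\psi + z^*\log\psi^*$ is real-valued and equals $\mu\theta + 2\lambda\log a + 2\Re\!\big(z\log(1+\tfrac{w}{a})\big)$, I would split the equation into its zero mode and its projection onto the nonzero modes. The zero mode yields the two scalar ODEs
\begin{align*}
\dot a &= \Im\langle wF\rangle, \\
\dot\theta &= -\langle F\rangle - a^{-1}\Re\langle wF\rangle,
\end{align*}
where $F = \mu\theta + 2\lambda\log a + 2\Re\!\big(z\log(1+\tfrac{w}{a})\big)$, while the nonzero modes obey $i\partial_t w = -\Delta w + \dot\theta\, w + P\big[(a+w)F\big]$, with $P$ the projection removing the average. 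Conservation of the $L^2$ norm, $a^2 + \Norm{w}{L^2}^2 = \rho^2$, slaves $a$ to $w$ at quadratic order, so the equilibrium is exactly $(\theta_*,\rho,0)$ with $\theta_* = -2\lambda\log\rho/\mu$, i.e. the constant \eqref{constant_SL}.

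First I would linearize around this equilibrium: writing $\theta = \theta_* + \tilde\theta$, the phase obeys $\dot{\tilde\theta} = -\mu\tilde\theta + O(\mathrm{quadratic})$, while the fluctuation obeys the closed linear equation $i\partial_t w = -\Delta w + zw + z^*w^*$. In Fourier variables this couples only the pair $(w_j,\overline{w_{-j}})$ for each $j\in\Z^d\setminus\{0\}$ through a constant matrix,
\[
M_j = \begin{pmatrix} |j|^2 + z & z^* \\ -z & -|j|^2 - z^* \end{pmatrix}, \qquad i\frac{\dd}{\dd t}\begin{pmatrix} w_j \\ \overline{w_{-j}} \end{pmatrix} = M_j \begin{pmatrix} w_j \\ \overline{w_{-j}} \end{pmatrix}.
\]
A direct computation gives $\mathrm{tr}\,M_j = -i\mu$ and $\det M_j = -|j|^2(|j|^2 + 2\lambda)$, so the eigenvalues of the generator $-iM_j$ have real part $-\alpha_j$ with $\alpha_j = \frac\mu2 - \max\big(0,\frac{\mu^2}4 - |j|^4 - 2\lambda|j|^2\big)^{\frac12}$; the hypothesis $\lambda > -\frac12$ guarantees $|j|^2 + 2\lambda \geq 1 + 2\lambda > 0$ for every $j\neq 0$, hence $\alpha_j > 0$ and contractivity on each nonzero mode. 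When $\frac{\mu^2}4 = |j|^4 + 2\lambda|j|^2$ the eigenvalues coincide and $M_j$ is a genuine Jordan block, producing the factor $(1+\beta_j t)$. Summing the bound $|(w_j(t),\overline{w_{-j}}(t))| \lesssim e^{-\alpha_j t}(1+\beta_j t)$ against Sobolev weights, and noting that $\alpha_j$ is nondecreasing in $|j|^2$ (again using $\lambda > -\frac12$) so that its minimum over $j\neq 0$ is attained at $|j|^2 = 1$, yields \eqref{psij} and identifies the global $\alpha,\beta$ of cases (i)--(iii): the underdamped regime where every mode decays at $\mu/2$, the critical threshold of the first mode, and the overdamped regime where the first mode is slowest, with $\beta = 1$ exactly when some higher mode sits at its own Jordan threshold $\mu^2 = 4n^2 + 8\lambda n$.

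It remains to promote the linear estimate to the full nonlinear statement. I would write the Duhamel formula for $(\tilde\theta,w)$ against the linear semigroup above and check that the discarded terms --- $\dot\theta\,w$, the quadratic-and-higher part of $P[(a+w)F]$, and the $O(w^2)$ corrections from the slaving $a = (\rho^2 - \Norm{w}{L^2}^2)^{1/2}$ and from the expansion \eqref{lelog} --- are at least quadratic in $(\tilde\theta,w)$ and map $H^s$ into $H^s$, which follows from the algebra property \eqref{algebra} and the analyticity of $u\mapsto\log u$ on $\mathcal{U}_s$. A continuity/bootstrap argument then closes the estimate: assuming $\Norm{w(t)}{H^s} + |\tilde\theta(t)| \leq 2C\varepsilon_0 e^{-\alpha t}(1+\beta t)$, the quadratic source is $O(\varepsilon_0^2 e^{-2\alpha t}(1+t)^2)$, and since $2\alpha > \alpha$ the integral $\int_0^t e^{-\alpha(t-\tau)}(1+(t-\tau))\,\varepsilon_0^2 e^{-2\alpha\tau}(1+\tau)^2\,\dd\tau$ stays bounded by $C\varepsilon_0 e^{-\alpha t}(1+\beta t)$ for $\varepsilon_0$ small, improving the constant and keeping the solution in $\mathcal{U}_s$ for all $t\geq 0$.

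The hard part will be the nonlinear closure in the Jordan cases (ii) and (iii): since the linear flow then carries a factor growing like $t$, one must verify that this polynomial growth is genuinely compensated by the strict gap between the rate $\alpha$ of the linear flow and the rate $2\alpha$ of the quadratic source, uniformly over all Fourier modes, including those near a resonance $\mu^2 = 4n^2 + 8\lambda n$ where the diagonalizing transformation degenerates and constants must be tracked through the $(1+t)$ form rather than a plain exponential. Establishing that the per-mode constants in the semigroup estimate are uniform in $j$, so that the weighted sum over $\Z^d$ converges and reproduces a single global rate $\alpha$, is the main technical point.
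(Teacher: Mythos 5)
Your reduction and linear analysis coincide with the paper's: the parametrization $\psi = e^{i\theta}(a+w)$, the slaving $a = (\rho^2 - \Norm{w}{L^2}^2)^{1/2}$, the pairing of $(w_j,\overline{w_{-j}})$ through your matrix $M_j$ (the paper's $A_n$ with $n=|j|^2$), the trace/determinant computation of the spectrum, the Jordan thresholds $\mu^2 = 4n^2+8\lambda n$, and the identification of $\alpha_j$ and of the global $(\alpha,\beta)$ in cases (i)--(iii) are all correct. Your two deferred "technical points" are also genuinely the right ones, and both are handled in the paper: uniformity in $j$ of the per-block constants is obtained by writing the diagonalizers $P_n$, $P_n^{-1}$ explicitly and checking their condition number is at most $2$ uniformly in $n$; and the nonlinear closure (which the paper performs via the substitution $U = e^{t\Lambda}V$ and the differential inequality $|\dot y| \leq M e^{-\alpha t/2} y^{3/2}$ for $y = \Norm{U}{H^s}^2$, rather than your Duhamel-plus-bootstrap, an acceptable alternative) does close because, as you note, the quadratic source decays at rate $2\alpha>\alpha$, so the Jordan factor $(1+t)$ is harmless.

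The genuine gap is in your treatment of the phase. Your bootstrap hypothesis $\Norm{w(t)}{H^s} + |\tilde\theta(t)| \leq 2C\varepsilon_0 e^{-\alpha t}(1+\beta t)$ cannot be initialized: the theorem assumes only $\Norm{\psi^0 - \langle\psi^0\rangle}{H^s}\leq \varepsilon_0$ and $\Norm{\psi^0}{L^2}=\rho$, so $w(0)$ is small but the phase $\theta(0)$ of $\langle\psi^0\rangle$ is arbitrary; $|\tilde\theta(0)| = |\theta(0)-\theta_*|$ can be of order $\pi$, and your continuity argument never gets started. For the same reason, declaring $\dot\theta\, w$ and the $\mu\tilde\theta w$ part of $P[(a+w)F]$ to be "quadratic in $(\tilde\theta,w)$" does not make them small. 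The repair --- and this is exactly what the paper's derivation achieves --- is to observe that these two $\theta$-dependent contributions cancel identically: the combination $\dot\theta\, w + P[(a+w)F] - (zw+z^*w^*)$ equals a remainder $F(w,w^*)$ that depends on $w$ alone and satisfies $\Norm{F(w,w^*)}{H^s}\leq C\Norm{w}{H^s}^2$ (the paper's equation \eqref{eqw}). One then runs the decay argument on $w$ only, with no smallness assumption on the phase, and recovers $\theta$ a posteriori from \eqref{thetamu}: the scalar ODE $\dot\theta + \mu\theta + 2\lambda\log a = O(\Norm{w}{H^s}^2)$ is globally (not just locally) contracting, so $\theta \to \theta_*$ exponentially from any initial phase, at a rate at least $\alpha$ since $\mu \geq 2\alpha$. (Alternatively one can use the exact symmetry $\psi \mapsto \psi e^{i\phi_0 e^{-\mu t}}$ of \eqref{SL_eq} to rotate the initial phase onto $\theta_*$.) As written, your proof only covers initial data whose mean already has phase $\varepsilon_0$-close to $-2\lambda\log\rho/\mu$, which is strictly weaker than the statement.
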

Let us make the following remarks: 
\begin{itemize}
\item The presence of $\beta = 1$ reflects the possibility of Jordan block in the reduced dynamics. 

\item There is no restriction on $\rho$, the plane wave solution can be arbitrarily large. 

\item When $\lambda$ is fixed, $\mu \to \infty$, the damping rate 
$$
\alpha = \frac{2\lambda + 1}{\mu} + \mathcal{O} \left( \frac{1}{\mu^3} \right)
$$
 goes to $0$. Thus a larger damping coefficient implies a slowlier relaxation to the equilibrium. This echoes some known behaviors in other context, see for instance \cite{herda2018} and the reference therein. To our knowledge, such behaviour was however never identified in conservative models coming from quantum mechanics. 

\item Note that the damping rate $\alpha_j$ decreases with the size of the mode. Hence the smallest damping rate is always given by $\alpha_j$ with $|j| = 1$, but the modes decay at speed depending on $|j|$.
This multiscale decomposition is confirmed by numerical experiments (see in particular Figure \ref{fig: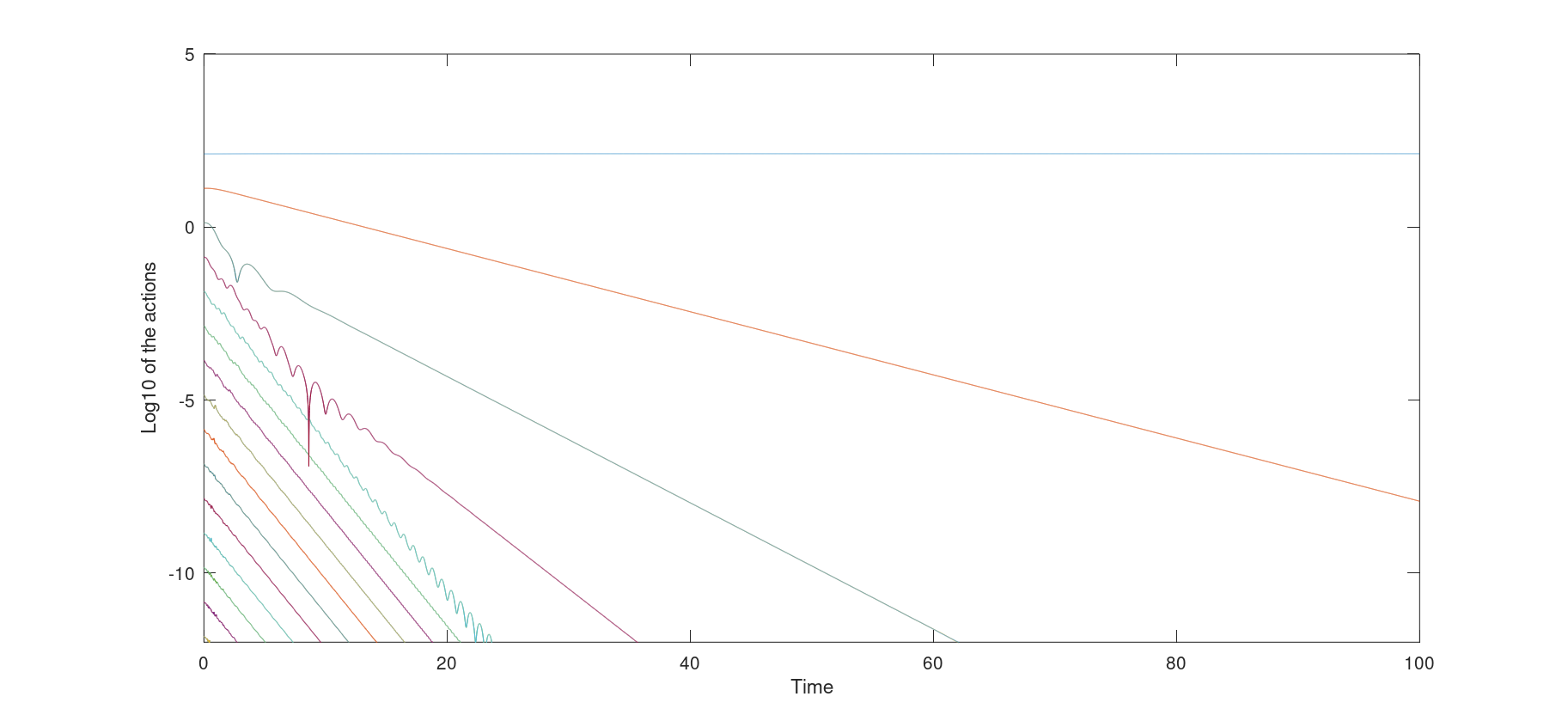}). 
\end{itemize}
\medskip 

We now state our second result, which analyzes the case $\mu = 0$, that it the logarithmic Schr\"odinger equation \eqref{logNLS}. Recall that 
for $m \in \Z^d$, plane wave solutions of \eqref{logNLS} are explicitly given by 
\begin{equation} \label{plane_wave_logNLS}
 \nu_m(t,x)=\rho e^{i \theta_0} e^{im \cdot x} e^{-i\left(|m|^2+2 \lambda \log \rho  \right) t} 
 \end{equation}
for all $t \geq 0$ and $x \in \T^d$, and $\rho=\Norm{ \nu_m(t,x) }{L^2(\T^d)}>0$. We prove the stability of such functions by $H^s$ perturbations. 
In this case we use a {\em normal form} strategy already employed in the cubic case (see \cite{faou2012}) which ensures the $H^s$-stability of plane waves solutions of \eqref{logNLS} for sufficiently large Sobolev exponent $s$, and in the following sense: 
\begin{theorem} \label{theorem_plane_wave} Let $m \in \Z^d$ be fixed. 
Let $-\frac{1}{2}<\lambda_-<\lambda_+$, $\rho > 0$, $\theta_0 \in \R$ and $N>1$ be fixed arbitrarily. 
Then there exist $s_0>0$, $C \geq 1$ and a set of full measure $\Lambda$ in the interval $\left[ \lambda_-,\lambda_+ \right]$ such that for every $s \geq s_0$ and every $\lambda \in \Lambda$, there exists $\eps_0>0$ such that the following holds: if the initial data $\psi^0 \in \Uc_s$ is such that 
$$
\Norm{ e^{-im.x} \psi^0 - \langle e^{-im.x} \psi^0\rangle}{H^s}=\eps \leq \eps_0,  \quad
\mbox{and} \quad 
\Norm{\psi^0}{L^2} = \rho, 
$$
then the solution $\psi(t)$ of \eqref{logNLS} with $\psi(0,x) =\psi^0(x) $ satisfies
\[ \Norm{ e^{-im.x} \psi(t,\,\cdot\,) - \rho e^{i \theta(t)}}{H^s} \leq C \eps \ \ \ \text{for} \ \ \ t\leq \eps^{-N}, \]
where $\theta(t)$ is such that 
$$
|\dot \theta + |m|^2 - 2 \lambda \log \rho | \leq C \varepsilon^2. 
$$
\end{theorem}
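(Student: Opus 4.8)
The plan is to follow the Birkhoff normal form strategy of \cite{faou2012,grebert2007,bambusi2006}, after reducing the question to a perturbation of the \emph{constant} solution. First I would remove the carrier mode by setting $u = e^{-im\cdot x}\psi$, which turns \eqref{logNLS} into
\[
i\partial_t u + \Delta u + 2i\,m\cdot\nabla u - |m|^2 u = \lambda u\log|u|^2,
\]
and sends $\nu_m$ to the space-homogeneous solution $\rho e^{i\theta(t)}$. Using the parametrization \eqref{proj}, $u = e^{i\theta}(a+w)$ with $\langle w\rangle = 0$ and $a>0$, I would derive the modulation equations for $(a,\theta)$ by projecting onto the zero mode, and the evolution equation for $w$ by projecting onto the nonzero modes. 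Using the analytic expansion \eqref{lelog} and exploiting that the leading modulation $\dot\theta = -|m|^2 - 2\lambda\log a + O(\Norm{w}{H^s}^2)$ cancels the $\log a$ contribution in the linear part, I expect the linearized operator for $w$ to be $Lw = -\Delta w - 2i\,m\cdot\nabla w + 2\lambda\,\Re w$, which, crucially, \emph{does not depend on} $a$ (hence on $\rho$): this is the scaling-invariance mechanism announced in the introduction, and it is why the genericity must be sought in $\lambda$ rather than in the amplitude.

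Next I would diagonalize $L$. Grouping the modes into conjugate pairs $(w_n,\overline{w_{-n}})$, the symbol of $L$ on each pair has eigenvalues
\[
\Omega_n^{\pm} = 2\,m\cdot n \pm |n|\sqrt{|n|^2 + 2\lambda},
\]
which are real and distinct for every $n\neq 0$ precisely because $\lambda > -\tfrac12$ forces $|n|^2 + 2\lambda \ge 1 + 2\lambda > 0$; this is where the hypothesis $\lambda_- > -\tfrac12$ enters, guaranteeing spectral stability with no Jordan blocks. I would then write \eqref{logNLS} in Hamiltonian form with energy $H(\psi) = \Norm{\nabla\psi}{L^2}^2 + \lambda\int_{\T^d}(|\psi|^2\log|\psi|^2 - |\psi|^2)\,\dd x$, pass to symplectic coordinates diagonalizing the quadratic part, and use mass conservation $a^2 + \Norm{w}{L^2}^2 = \rho^2$ to eliminate $a$ as an analytic function of the actions. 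Expanding the analytic nonlinearity via \eqref{lelog} produces a convergent power series in $w$ with bounded coefficients, so the reduced Hamiltonian takes the form $H_0 + P$ with $H_0 = \sum_n \Omega_n|z_n|^2$ and $P$ of order $\ge 3$ satisfying the tame estimates furnished by the algebra property \eqref{algebra}.

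The arithmetic heart of the proof is the non-resonance analysis producing the full-measure set $\Lambda$. I would observe that \eqref{logNLS} conserves momentum, so in the $u$-variables every monomial surviving in the normal form satisfies $\sum_j\sigma_j n_j = 0$; consequently the integer part $2\,m\cdot n$ cancels in every admissible frequency combination, $\sum_j\sigma_j\Omega_{n_j} = \sum_j\sigma_j|n_j|\sqrt{|n_j|^2+2\lambda}$, reducing the analysis to the $\lambda$-dependent radicals. For each fixed order $r\le N+2$ and each non-trivially-paired index family, this is a non-constant real-analytic function of $\lambda$ on $[\lambda_-,\lambda_+]$, hence vanishes only on a null set; a quantitative version — lower bounds on $|\sum_j\sigma_j\Omega_{n_j}|$ degrading polynomially in $\max_j|n_j|$ and summed through Borel--Cantelli over the countably many index families — yields a Diophantine non-resonance condition valid for all $\lambda$ outside a set of measure zero. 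Taking $\Lambda$ to be its complement gives full measure, and the polynomial loss is absorbed by taking $s_0$ large enough.

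With this non-resonance condition in hand, the conclusion follows from the abstract Birkhoff normal form theorem of \cite{bambusi2006,grebert2007}: there is a canonical transformation $\tau$, $O(\eps^2)$-close to the identity in $H^s$, conjugating the reduced Hamiltonian to $H_0 + Z + R$ with $Z$ in normal form and a remainder $R$ of order $N+3$ in $w$. Since $Z$ depends only on the super-actions of the frequency clusters — which, for $\lambda\in\Lambda$, group modes of equal $|n|$ and hence equal Sobolev weight — these are conserved by the truncated flow and change by at most $O(\eps^{N+3})$ per unit time, hence by $O(\eps^3)$ over $[0,\eps^{-N}]$, so that $\Norm{w(t)}{H^s}\le C\eps$ for $t\le\eps^{-N}$. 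Translating back through $u = e^{i\theta}(a+w)$ and $|a-\rho| = O(\eps^2)$ gives $\Norm{e^{-im\cdot x}\psi(t,\,\cdot\,) - \rho e^{i\theta(t)}}{H^s}\le C\eps$, while the modulation equation yields $|\dot\theta + |m|^2 - 2\lambda\log\rho|\le C\eps^2$. I expect the main obstacle to be the non-resonance step: verifying that the admissible combinations $\sum_j\sigma_j|n_j|\sqrt{|n_j|^2+2\lambda}$ are genuinely non-constant in $\lambda$ (ruling out hidden algebraic identities among these radicals) and extracting small-divisor bounds uniform enough to meet the hypotheses of the abstract theorem.
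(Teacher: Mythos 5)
Your overall strategy is the paper's strategy: eliminate the zero mode through the parametrization \eqref{proj} with $a$ recovered from mass conservation, reduce to a Hamiltonian system for $w$, diagonalize the pairs $(w_j,\overline{w}_{-j})$ symplectically, establish non-resonance of the frequencies for almost every $\lambda$, and conclude with the Bambusi--Gr\'ebert normal form theorem (the paper's Theorem \ref{theorem_normal_form}). Your two reductions are sound variants of the paper's: where you factor out $e^{im\cdot x}$ and cancel the resulting Doppler shift $2m\cdot n$ in the frequencies by momentum conservation, the paper uses Galilean invariance to reduce to $m=0$ outright; where you keep $\rho$ and observe that the linearization is amplitude-independent, the paper uses the scaling gauge $\psi \mapsto \kappa\psi e^{2it\lambda\log\kappa}$ to set $\rho=1$. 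Your frequencies $2m\cdot n \pm |n|\sqrt{|n|^2+2\lambda}$ and the role of $\lambda>-\tfrac12$ are correct. (One point you gloss over, which the paper checks explicitly: the map $\psi\mapsto(a,\theta,w)$ is \emph{not} symplectic, and the Hamiltonian character of the reduced $w$-system must be verified by hand, using $\partial a/\partial w_j^* = -w_j/(2a)$; this is a computation, not an obstacle.)

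The genuine gap is the non-resonance step, and you half-identified it yourself. The small-divisor bound you propose, degrading polynomially in $\max_j|n_j|$ and obtained by analyticity in $\lambda$ plus a Borel--Cantelli union over index families, is strictly weaker than what the abstract theorem requires. The hypothesis of Theorem \ref{theorem_normal_form} (Bambusi's condition, the paper's Lemma \ref{lemma_non_resonance_frenquencies}) is
\begin{equation*}
| \Omega_{m_1} + \cdots + \Omega_{m_p} - \Omega_{n_1} - \cdots - \Omega_{n_q} | \ \geq\ \frac{\gamma}{\mu_3(m,n)^{\alpha}},
\end{equation*}
where $\mu_3(m,n)$ is the \emph{third largest} index, and the difference is not cosmetic. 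The monomials that endanger the $H^s$ norm are precisely the zero-momentum ones with two large, nearly opposite Fourier indices and all remaining indices small; along such families $\max_j|n_j|\to\infty$ while $\mu_3$ stays bounded, so a bound decaying in $\max$ permits divisors tending to $0$ there. Dividing by such divisors in the homological equations produces a generating function whose vector field loses derivatives (it maps $H^{s+2\alpha}$ to $H^s$ but not $H^s$ to $H^s$), its flow no longer acts boundedly on any fixed Sobolev ball, and the $t\leq\eps^{-N}$ bootstrap does not close. Moreover, the soft argument you sketch cannot deliver the $\mu_3$-form: a family-by-family measure estimate inevitably forces thresholds decaying in the largest index of the family, since there are infinitely many families with two large indices. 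What rescues the situation is structural, not measure-theoretic: the asymptotics $\Omega_n = n + \lambda + \mathcal{O}(1/n)$ (with $n=|j|^2$) show that any combination with one or two large indices equals an integer plus a combination of bounded-index frequencies up to a small error, reducing the estimate to finitely many families, for which the measure argument then applies. This is exactly the content of Lemma 2.2 of \cite{faou2012} (following \cite{bambusi2003}), which the paper invokes verbatim with $\lambda_0\rho^2$ there replaced by the parameter $\lambda$. With that lemma substituted for your non-resonance step, the rest of your proposal goes through as written.
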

The proof relies on a Birkhoff normal form Theorem (see \cite{grebert2007} and \cite{bambusi2006}), which will be recalled in Section 3.6. with Theorem \ref{theorem_normal_form}.\\

\section{Asymptotic stability for Schr\"odinger-Langevin equation}
\subsection{Elimination of the zero mode}
Note that  if $\psi$ is solution of \eqref{SL_eq} with initial datum $\psi(0,x)=\psi^0$, then the purely time-dependent gauge transform
\begin{equation}
\label{gauge1} \kappa \psi \exp \left(2i \frac{\lambda}{\mu} \log \kappa \left(1- e^{-\mu t} \right) \right) \end{equation}
is also a solution of \eqref{SL_eq} with initial datum $\psi(0,x)=\kappa \psi^0$. Hence by taking $\kappa = \rho^{-1}$ and modifying $\varepsilon_0$ and $C$ accordingly, we see that it is sufficient to prove the result for $\rho = 1$. 

We search a solution under the form 
\[ \psi(t,x)=e^{i \theta(t)} (a(t) +w(t))\]
with $a(t)>0$, $\theta(t) \in \R$, and 
\[ w(t,x)= \sum_{j \neq 0} w_j(t) e^{i j \cdot x} \]
on the Fourier basis. For $r_0 > 0$, we denote 
$$
\Bc_s(r_0) = \{ w \in H^s\, | \, \langle w \rangle = 0, \quad \mbox{and} \quad \Norm{w}{H^s} \leq r_0\}. 
$$
Note that we have $\psi - \langle \psi \rangle = e^{i\theta} w$ and hence we can assume $w(0,.) \in \Bc_s(r_0)$ at time $t=0$, for $r_0$ small enough. 
We also recall that the $L^2$ norm of the solution is $\Norm{u}{L^2}^2 = a^2 + \Norm{w}{L^2}^2 = \rho^2$, and it is preserved along the dynamics, hence we have
\begin{equation} \label{parseval}
 a = \sqrt{ 1 - \Norm{w}{L^2}^2}.   
 \end{equation}
Now we calculate with $\psi = e^{ i \theta} (a + w)$, using the representation \eqref{SL_eq2} with $z = \lambda + \frac{\mu}{2i}$: 
\begin{align*}
&i \partial_t \psi = -e^{i \theta } (a + w) \dot \theta  + e^{i \theta }i \dot a + e^{i\theta}i \partial_{t} w, \\
&\Delta \psi = e^{i \theta}\Delta w, \\
& z  \psi \log(\psi)  = z e^{i \theta}( a + w) \left(i \theta + \log a + \log \left(1 + \frac{w}{a}\right) \right),\\
& z^*  \psi \log(\psi^*) = z^{*} e^{i \theta}( a + w) \left(-i \theta + \log a + \log \left(1 + \frac{w^*}{a}\right) \right). 
\end{align*}
We then see that \eqref{SL_eq2} is equivalent to 
\begin{multline*}
- (a + w) \dot  \theta  + i \dot a + i \partial_{t} w  + \Delta w = \\
z ( a + w) \left(i \theta + \log a + \log \left(1 + \frac{w}{a}\right) \right) + z^{*} ( a + w) \left(-i \theta + \log a + \log \left(1 + \frac{w^*}{a}\right)\right),
\end{multline*}
or equivalently 
\begin{equation}
\label{pifpaf}
- (a + w)(  \dot \theta  + \mu \theta + 2 \lambda \log(a) )    + i \dot a + i \partial_{t} w  + \Delta w = z w + z^* w^* + X(w,w^*),
\end{equation}
where 
\begin{align*}
X(w,w^*) &:= z ( a + w)  \log \left(1 + \frac{w}{a}\right) - z w  + z^{*} ( a + w) \log \left(1 + \frac{w^*}{a}\right)- z^* w^* \\
&= - z ( a + w)  \sum_{n \geq 1} \frac{1}{n}\left(- \frac{w}{a }\right)^{n}- z w - z^* ( a + w) \sum_{n \geq 1} \frac{1}{n}\left(- \frac{w^*}{a }\right)^{n} - z^*w^*.
\end{align*}
Hence, as $a$ is given by \eqref{parseval}, we have, for $r_0$ small enough,
$$
w \in \Bc_s(r_0) \quad \Longrightarrow \quad \Norm{X(w,w^*)}{H^s} \leq C_{r_0} \Norm{w}{H^s}^2. 
$$
Taking the average $\langle \, \cdot \, \rangle$ of \eqref{pifpaf}, we obtain	
$$
- a (  \dot \theta  + \mu \theta + 2 \lambda \log(a) )    + i \dot a  = \langle X(w,w^*) \rangle, 
$$
which yields an equation for $\theta$:
\begin{equation}
\label{thetamu}
\dot \theta  + \mu \theta + 2 \lambda \log(a) = \frac{1}{a} \mathrm{Re}\langle X(w,w^*) \rangle.
\end{equation}
We also note that as long as $w \in \Bc_s(r_0)$, we have 
$$
\left|\frac{1}{a} \mathrm{Re}\langle X(w,w^*) \rangle\right|\leq C \Norm{w}{H^s}^2. 
$$
Projecting again \eqref{pifpaf} on the non zero modes, we obtain 
$$
i \partial_t w + \Delta w = z w + z^* w^* + X(w,w^*) - \langle X(w,w^*)\rangle + w (  \dot \theta  + \mu \theta + 2 \lambda \log(a) ).$$
This shows that the equation for $w$ can be written 
\begin{equation}
\label{eqw}
i \partial_t w = -\Delta w + z w + z^* w^* + F(w,w^*),
\end{equation}
where 
$$
F(w,w^*) = X(w,w^*) - \langle X(w,w^*)\rangle + \frac{w}{a}   \mathrm{Re}\langle X(w,w^*)\rangle
$$
satisfies the estimate 
$$
w \in \Bc_s(r_0) \quad \Longrightarrow \quad \Norm{F(w,w^*)}{H^s} \leq C \Norm{w}{H^s}^2. 
$$

\subsection{Diagonalization and Jordan blocks}
To analyze the long time behavior of \eqref{eqw}, we embed it into the complex system 
\begin{equation}
\label{eqw2}
\left|
\begin{array}{rcl}
i \partial_t \xi &=& -\Delta \xi + z \xi + z^* \eta + F(\xi,\eta)\\ 
i \partial_t \eta &=& \Delta \eta - z^* \eta - z \xi - F(\xi,\eta)^*
\end{array}
\right. 
\end{equation}
and we consider this system in the ball $(\xi,\eta) \in \Bc_s(r_0) \times \Bc_s(r_0)$ equipped with the norm product 
$$
\Norm{(\xi,\eta)}{H^s}^2 = \Norm{\xi}{H^s}^2 + \Norm{\eta}{H^s}^2.
$$ 
We can write this system as 
\begin{equation}
\label{eqw4}
i \partial_t \begin{pmatrix}
\xi \\ \eta
\end{pmatrix} = \begin{pmatrix} -\Delta + z & z^* \\ -z & \Delta - z^* \end{pmatrix}\begin{pmatrix}
\xi \\ \eta
\end{pmatrix} + Q(\xi,\eta), 
\end{equation}
where $Q: \Bc_s(r_0) \times \Bc_s(r_0) \to H^s \times H^s$ satisfy the estimate 
$$
\Norm{Q(\xi,\eta)}{H^s} \leq C \Norm{(\xi,\eta)}{s}^2.
$$
Now, if we consider the extended system \eqref{eqw2} with an initial condition satisfying 
$$
\xi(0) = \eta(0)^*
$$
then for all times $\xi(t) = \eta(t)^*$ and  $\xi(t) = w(t)$  coincides with the solution of \eqref{eqw}. Note that in Fourier, this condition yields $\xi_j = (\eta^*)_j = (\eta_{-j})^*$. 

Taking the Fourier transform of the equation \eqref{eqw4}, we obtain the collection of equations
\begin{equation}
\label{eqw44}
\forall\, j \in \Z^d\backslash \left\{ 0 \right\}, \quad 
i \partial_t \begin{pmatrix}
\xi_j \\ \eta_j
\end{pmatrix} = A_{|j|^2}\begin{pmatrix}
\xi_j \\ \eta_j
\end{pmatrix} + Q_j(\xi,\eta). 
\end{equation}
where for $n = |j|^2$, 
\[ A_n= \left( \begin{array}{cc} n + z  & z^*  \\
                                - z & -n- z^*                      
                        \end{array} \right) =  \left( \begin{array}{cc} n + \lambda +\frac{\mu}{2i} & \lambda -\frac{\mu}{2i} \\
                                - \lambda - \frac{\mu}{2i} & -n-\lambda +\frac{\mu}{2i}                       
                        \end{array} \right). \] 
In particular, the eigenvalues of the matrix $A_n$ are 
$$
\frac{\mu}{2i } \pm \sqrt{n^2+2 \lambda n - \frac{\mu^2}{4}}.
$$
We can distinguish three cases:

\noindent {\em (i)} \ul{$4 n^2+8 \lambda n - \mu^2  > 0$}. In this case the eigenvalues are of the form $\frac{\mu}{2i} \pm \delta_n$, with 
$$
\delta_n  = \sqrt{n^2+2 \lambda n - \frac{\mu^2}{4}} > 0. 
$$
We can explicit the diagonalization $A_n = P_n D_n P_n^{-1}$, with the diagonal matrix
\[ D_n=\left( \begin{array}{cc} 
\frac{\mu}{2i} - \delta_n & 0 \\
0 & \frac{\mu}{2i} + \delta_n \end{array} \right), \]
and the matrix of change of coordinates
\begin{equation}
\label{eqP-1} P_n^{-1}=\frac{1}{\sqrt{2 \delta_n (\delta_n + n + \lambda)}}\left( \begin{array}{cc}  n + \lambda + \delta_n &
        \lambda - \frac{\mu}{2i} \\
         \lambda + \frac{\mu}{2i} &
         n + \lambda + \delta_n 
        \end{array} \right) 
\end{equation}
and
\begin{equation}
\label{eqP} P_n=\frac{1}{\sqrt{2 \delta_n (\delta_n + n + \lambda)}}\left( \begin{array}{cc}
         n + \lambda + \delta_n &
        - \left( \lambda - \frac{\mu}{2i} \right) \\
         - \left(\lambda + \frac{\mu}{2i} \right) &
         n + \lambda + \delta_n 
        \end{array} \right).
\end{equation}
Note that $P_n$ is hermitian, has condition number smaller than 2 and that
\[ \det \left( P_n \right) =  \det \left( P_n^{-1} \right) =1.  \]

\noindent {\em (ii)} \ul{$4 n^2+8 \lambda n - \mu^2  < 0$}. Note that for given $\lambda$ and $\mu$, this situation occurs only a finite number of times, as when $n$ becomes large, $n^2 + 2\lambda n$ goes  to $+\infty$. In this case the two eigenvalues are under for the form $-i\alpha_n$ and $-i \beta_n$, with 
\begin{align*}
&\alpha_n = \frac{\mu}{2} - \sqrt{ \frac{\mu^2}{4} - n^2- 2 \lambda n}, 
&\beta_n = \frac{\mu}{2} + \sqrt{ \frac{\mu^2}{4} - n^2- 2 \lambda n}.
\end{align*}
Now as $n \geq 1$ and $\lambda > -\frac12$, we have that $n^2+ 2 \lambda n > 0$ and hence $\beta_n >\alpha_n >0$. Let us finally note that the sequence $n^2 + 2 \lambda n$ is increasing, so the minimum of all the $\alpha_n$ and $\beta_n$ is $\alpha_1$, and this case happen if $4 +8 \lambda  - \mu^2  < 0$. 
The diagonalization matrices are the same as in \eqref{eqP-1} and \eqref{eqP}, and we have 
$$
P_n^{-1} A_n P_n = \begin{pmatrix} - i \beta_n  & 0 \\ 0 & - i \alpha_n \end{pmatrix},
$$
but note that as $\delta_n = i (\frac{\mu^2}{4} - n^2 - 2 \lambda n)^{\frac12}$ is now imaginary, these matrices are not hermitian anymore and have no special geometric structure.

\noindent {\em (iii)} \ul{$4 n^2+8 \lambda n - \mu^2  = 0$}. In this case $\frac{\mu}{2i}$ is a double eigenvalue. As 
$$(n + \lambda)^2 = ( \lambda - \frac{\mu}{2i}) ( \lambda + \frac{\mu}{2i}),$$ 
we have that $( n+ \lambda, - \lambda - \frac{\mu}{2i})$ is an eigenvector 
and the matrix $A_n$ can be put under the Jordan form  
$$
P_{n}^{-1} A_n P_n = \begin{pmatrix}  \frac{\mu}{2i} &  \lambda - \frac{\mu}{2i} \\ 0 &  \frac{\mu}{2i} \end{pmatrix},
$$
with 
$$
P_n^{-1} = \frac{1}{n + \lambda}\begin{pmatrix}
2(n + \lambda) & \lambda - \frac{\mu}{2i} \\ \lambda + \frac{\mu}{2 i} &   (n + \lambda) 
\end{pmatrix}
$$
and
$$
P_n = \frac{1}{n + \lambda}\begin{pmatrix}
n + \lambda & -\lambda + \frac{\mu}{2i} \\ - \lambda - \frac{\mu}{2 i} &  2 (n + \lambda) 
\end{pmatrix}.
$$ 

Again, the matrices $P_n$ and $P_n^{-1}$ have condition number smaller than $2$, uniformly in $n$. 

\medskip
We now go back to \eqref{eqw4}, and we define the operator $V = \begin{pmatrix} f \\g \end{pmatrix}= P^{-1} \begin{pmatrix} \xi \\ \eta\end{pmatrix}$ in Fourier by the formula 
$$
\forall\, j \in \Z^d \backslash \{0\}, \quad 
V_j = \begin{pmatrix} f_j \\g_j \end{pmatrix} = P_{|j|^2}^{-1} \begin{pmatrix} \xi_j \\ \eta_j\end{pmatrix}.
$$
From the properties of the matrices $P_n$ exhibited in the previous three cases, the application $(\xi,\eta) \mapsto V$ is bounded and invertible in $H^s \times H^s$, the inverse being given by the multiplication with the matrix $P_{|j|^2}$.  Hence the system becomes 
$$
 \quad  i \partial_t V_j   = D_{|j|^2} V_j + R_j(V),
$$
where $R(V) = P^{-1} Q (P V)$ satisfies 
$$
\Norm{R(V)}{H^s} \leq C \Norm{V}{H^s}^2
$$
for $V \in \Bc_s (r_0) \times \Bc_s(r_0)$ and for some $r_0$ small enough. Here, the two by two matrices $D_{|j|^2}$ are given explicitly in term of $\lambda$, $\mu$ and $|j|^2$. 

Let $\alpha > 0$. 
We now define the operator $\Lambda$ as 
$$
(\Lambda V)_j = \alpha V_j \quad \mbox{if} \quad 4 n^2+8 \lambda n - \mu^2  \neq 0
$$
and 
$$
(\Lambda V)_j =  \begin{pmatrix}  \alpha &  -i \lambda + \frac{\mu}{2} \\ 0 &  \alpha \end{pmatrix} V_j \quad \mbox{if} \quad 4 n^2+8 \lambda n - \mu^2  = 0.
$$
We calculate that in this latter case, we have for $t \in \R$, 
$$
e^{t \Lambda_j }\begin{pmatrix} f_j \\g_j \end{pmatrix} =  \begin{pmatrix}  1  &  t(-i \lambda + \frac{\mu}{2}) \\ 0 &  1 \end{pmatrix} \begin{pmatrix} f_j \\g_j \end{pmatrix}.
$$
We can define the change of variable $U = e^{t \Lambda} V$ so we have 
\begin{align*}
i \partial_t U &= e^{t \Lambda} (i \Lambda  + D) e^{-t\Lambda} U + e^{t \Lambda} R(e^{- \Lambda t} U)\\  
&= (i \Lambda  + D) U + e^{t \Lambda} R(e^{- \Lambda t} U), 
\end{align*}
where the last equality comes from the fact that if $4 n^2+8 \lambda n - \mu^2  \neq 0$ the block operator $e^{t \Lambda_j}$ is the scalar multiplication by $e^{t\alpha}$, and in the Jordan block case, 
\begin{equation}
\label{Michael}
i \Lambda_j + D_{|j|^2} = \begin{pmatrix}
i (\alpha - \frac{\mu}{2}) & 0 \\ 0 & i (\alpha - \frac{\mu}{2})  
\end{pmatrix}.
\end{equation}
This leads to the following estimate: when $U \in \Bc_s(r_0)$, as $R$ is at least quadratic, we have that for $ t \geq 0$, 
$$
\Norm{e^{t \Lambda} R(e^{- \Lambda t} U)}{H^s} \leq C e^{- \alpha t} (1 + \beta t^3) \Norm{U}{H^s}^2
$$
where $\beta = 0$ in the case where $4 n^2+8 \lambda n - \mu^2  \neq 0$ for all $n$, {\em i.e.} when no Jordan block is present. 

Let us examine the operator $i \Lambda + D$. In the Jordan block case {\em (iii)}, we have seen that the operator is given by \eqref{Michael}. In the case $(i)$, it is 
$$
i \Lambda_j + D_{|j|^2} = \begin{pmatrix}
i ( \alpha - \frac{\mu}{2})  - \delta_n  & 0 \\ 0 & i ( \alpha - \frac{\mu}{2})  + \delta_n . 
\end{pmatrix}
$$
and in the case {\em (ii)},
$$
i \Lambda_j + D_{|j|^2} = \begin{pmatrix}
i ( \alpha - \beta_n)   & 0 \\ 0 & i ( \alpha - \alpha_n)   
\end{pmatrix}.
$$
Hence, if $\alpha =  \min( \alpha_n, \frac{\mu}{2}) = \min( \alpha_1, \frac{\mu}{2}) $, we have that
$$
i \Lambda + D =  - i A + B,
$$
where $A$ and $B$ are diagonal with real coefficients, the coefficients of $A$ being nonnegative. 

Let $L$ be the operator defined on $H^s \times H^s$ by the formula 
$$
(L U)_j = |j| U_j. 
$$
As $i \Lambda + D$ is diagonal, it commutes with $L$. 
Then we have for $U = \begin{pmatrix} f \\ g \end{pmatrix}$: 
\begin{align*}
\Norm{U}{H^s}^2 &= \sum_{j \neq 0} |f_j|^2 |n|^{2s} +  \sum_{j \neq 0} |g_j|^2 |n|^{2s}\\
& = ( L^s U, L^{s} U)_{L^2} = (  U, L^{2s} U)_{L^2}
\end{align*}
where for $U = \begin{pmatrix} f \\ g \end{pmatrix}$ and $V = \begin{pmatrix} \xi \\ \eta \end{pmatrix}$, 
$$
( U, V)_{L^2} = \frac{1}{2\pi} \int_{\T^d} \overline{f(x)} \xi(x) \dd x + 
\frac{1}{2\pi} \int_{\T^d} \overline{g(x)} \eta(x) \dd x .
$$
Let us calculate 
\begin{align*}
\frac{\dd}{\dd t}\Norm{U}{H^s}^2  &= 2\mathrm{Re}  ( L^s U, L^{s} \partial_t  U)_{L^2} 
\\
&=  2\mathrm{Re}   ( L^s U, L^{s} ( -  A -i  B) U)_{L^2} + K(t,U)\\
&= - 2\mathrm{Re}   ( L^s U, A L^{s}  U)_{L^2} + K(t,U), 
\end{align*}
where we have the estimate 
$$
|K(t,U)| \leq C e^{- \alpha t} (1 + \beta t^3) \Norm{U}{H^s}^3 .
$$

Moreover, with the choice of $\alpha$, we have 
$$
- 2\mathrm{Re}   ( L^s U, A L^{s}  U)_{L^2} \leq 0, 
$$
so we end up with the estimate 
$$
\left|\frac{\dd}{\dd t}\Norm{U}{H^s}^2 \right| \leq C e^{- \alpha t} (1 + \beta t^3) \Norm{U}{H^s}^3\leq M e^{- \frac{\alpha}{2} t}\Norm{U}{H^s}^3
$$
for some positive constant $M$, which is valid as long as $U(t) \in \Bc_s(r_0) \times \Bc_s(r_0)$ with $r_0$ small enough. 
If we denote $y = \|  U \|^2_{H^s(\T^d)}$, we have to study the differential inequation
\[  | \dot{y} | \leq M  e^{-\frac{\alpha}{2}t} y^{3/2}.    \]
We look at the following differential equation, for all $t \geq0$,
\[  \dot{f}(t) = M e^{-\frac{\alpha}{2}t} f(t)^{3/2}.  \]
Note that as the function $(t,f(t)) \mapsto e^{-\frac{\alpha}{2}t}f(t)^{3/2}$ is locally lipschitz with respect to its second variable $f$, the classical theory of Cauchy-Lipschitz theorem applies, and as $f(0)=y(0)=\Norm{ U(0)}{H^s}^2=\Norm{ V(0)}{H^s}^2>0$, we know that
$f(t)>0$ as long as the solution $f$ exists. In particular, we can write

\[ \frac{\dot{f}(t)}{ f(t)^{3/2}}= \frac{\dd}{\dd t} \left( \frac{-2}{ \sqrt{f(t)}} \right)=C_s  e^{-\frac{\alpha}{2}t},  \]
so
\[  \sqrt{f(t)} = \frac{1}{ \frac{1}{\sqrt{f(0)}}+ \frac{M}{\alpha} \left( e^{-\frac{\alpha}{2}t} -1 \right) }  \]
by integrating, so $f$ remains uniformly bounded under the condition
\[ f(0) < \left(\frac{\alpha}{M}\right)^2,    \]
and more precisely
\[  f(0) \leq f(t) \leq \frac{f(0)}{\left( 1 - \frac{M}{\alpha}\sqrt{f(0)}  \right)^2} < \infty,   \]
in particular the solution $f$ do not blow-up in finite time and exists for all time $t \geq 0$. As $f(0)=\Norm{  U(0)}{H^s}^2$, we have by comparison that $y(t) \leq f(t)$ for all $t\geq0$, so 
\[  \sup_{t \geq 0}  \Norm{  U(t)}{H^s}^2 < \infty \]
under the condition
\[ \Norm{  U(0)}{H^s} < \frac{\alpha}{M}.   \]
Recalling that $U(t)=e^{\Lambda t} V(t)$, we get that
\[ \Norm{ V(t)}{H^s} \leq C \big(\mu,s,d,\Norm{V(0)}{H^s} \big) e^{-\alpha t} ( 1 + \beta t),   \]
and this shows Theorem \ref{theorem_plane_wave_SL}, $\beta = 1$ or $\beta = 0$ reflecting the presence of Jordan block in the dynamics. Estimate \eqref{psij} is easily obtained by looking at the $j$-th block. 

Now if the initial conditions $\xi^* = \eta$ are satisfied, it gives a solution of the initial system. It remains to control $a$ and $\theta$, which is read in a non trivial relation between the components of $U$ and $V$, but without affecting the fact that $V \in \Bc_s$. But from \eqref{parseval} we obtain directly 
$$
| a - 1| \leq C \Norm{V}{H^s}^2 \leq C e^{-2 \alpha t} ( 1 + \beta t)^2, 
$$
and from \eqref{thetamu}
$$
|\dot \theta + \mu \theta| \leq  C e^{-2 \alpha t} ( 1 + \beta t)^2 \leq C e^{- \frac32 \alpha t}.
$$
We deduce that 
$$
|\theta(t)| \leq C e^{- \mu t }\int_{0}^t e^{(  \mu - \alpha) s } \dd s = \frac{1}{\mu - \frac32 \alpha}( e^{ - \frac32 \alpha t } - e^{- \mu t}) 
$$
and this shows the result, as $\alpha \leq \frac{\mu}{2}$.

\section{Stability for the logarithmic Schr\"odinger equation}

We now prove Theorem \ref{theorem_plane_wave}. The strategy follows the lines of \cite{faou2012} and uses a Birkhoff normal form reduction as in \cite{bambusi2003,bambusi2006}. As in the previous section, we can eliminate the zero mode and perform block diagonalization of the linear operator, but the preservation of the Hamiltonian structure is crucial. Indeed, the logarithmic Schr\"odinger equation is associated with the energy
\begin{equation}
\label{hamil}
    H(\psi,\psi^*) :=  \Norm{\nabla \psi}{L^2}^2 + \frac{\lambda }{(2\pi)^d}\int_{\T^d} |\psi(t,x)|^2 \left( \log |\psi(t,x)|^2 - 1\right)  dx,  
\end{equation}   which is preserved for all time $t \geq 0$, as equation \eqref{logNLS} can be written
$$
i \partial_t \psi = \frac{\partial H}{\partial \psi^*}( \psi,\psi^*)
\quad\mbox{and hence} \quad 
 \frac{\dd}{\dd t}H(\psi(t),\psi(t)^*)=0. 
 $$

\subsection{Reduction to the case $m=0$}
Equation \eqref{logNLS} satisfies the galilean invariance principle, which means that if $\psi$ is a solution, then
\[ \varphi (t,x)=\psi(t,x-vt) e^{-i \left( |v|^2 t/2 - v \cdot x \right)}    \]
is also a solution for every $v \in \Z^d$. Using this property on the plane wave $\nu_m$ with $v=-m$, we get that
\[ \nu_m(t,x +m t) e^{-i \left( |m|^2 t +m \cdot x \right)}  = \rho e^{-2i \lambda t \log \rho} = \nu_0(t,x),   \]
which means that we can restrict our attention to the case $m=0$. Note that this transformation preserves the $L^2$-norm. \\

Another important feature of \eqref{logNLS} is the effect of a scaling factor (cf \cite{carles2018}) : unlike what happens in the case of the usual power-like nonlinearity $\lambda \psi |\psi|^{\gamma}$ for $\gamma>1$, if $\psi$ is a solution to \eqref{logNLS}, then
\[  \kappa \psi(t,x) e^{2it\lambda \log \kappa} \]
also solves \eqref{logNLS} with initial datum $\kappa \psi^0$ (compare \eqref{gauge1}). Keeping this property in mind, and using the conservation of the $L^2$ norm of any solution of \eqref{logNLS}, we will take in the following, for all $t \geq 0$,  
\[ \Norm{ \psi(t)}{L^2} = \Norm{ \psi^0}{L^2}= 1, \]
which means that we can consider only the case $\rho = 1$. 
\subsection{Elimination of the zero mode}
We perform the same decomposition as in the previous section: we have 
\begin{equation}
\label{psiaw}
\psi = e^{i \theta} ( a + w)
\end{equation}
with $\langle w \rangle = 0$ and $a > 0$. The preservation of the $L^2$ norm guarantees that $a = \sqrt{1 - \Norm{w}{L^2}^2}$ as in \eqref{parseval}. We obtain exactly the same equations for $\theta$ - \eqref{thetamu} and $w$ - \eqref{eqw}, but with $\mu = 0$. 
The main point is to check that $F$ is Hamiltonian, and that we can apply a Birkhoff normal procedure, {\em i.e.} that the frequencies are generically non resonant.

We decompose the solution on the usual Fourier basis $\psi(t,x)= \sum_{j \in \Z^d} \psi_j(t) e^{ij \cdot x}$. The Hamiltonian $H(\psi,\psi^*)$ (see \eqref{hamil}) can be viewed as a function of the coefficients $\psi_j$ and $ \psi_{j}^*$, and we have 
$$
i \dot \psi_j = \frac{\partial H}{\partial \psi^*_j}( \psi, \psi^*). 
$$
Now \eqref{psiaw} is viewed as a change of variable for the Fourier modes: it defines a function $\psi \mapsto (a,\theta,w)$ with $w=(w_j)_{0 \neq j \in \Z^d}$, $0 \leq a \in \R$ and $\theta \in \R$ defined by
\[  \psi_0= a e^{i \theta} \ \ \ \text{and} \ \ \ \psi_j = w_j e^{i \theta} \ \ \ \text{for} \ \ \  j \in \mathcal{Z} := \Z^d \backslash \left\{ 0 \right\} . \]
In these new variables, the Hamiltonian function can be written 
$$
H(\psi,\psi^*) = \Hc(a,w,w^*), 
$$
as the Hamiltonian is gauge invariant ({\em i.e.} invariant by the transformation $\psi \mapsto e^{i\theta} \psi$), however this transformation is not symplectic. As $\psi_0 = e^{i \theta}a$, we obtain the following collection of equations: 
$$
\left|
\begin{array}{l}
\displaystyle i \dot{a} - \dot{\theta} a =  \frac{\partial \mathcal{H}}{\partial a} (a,w,w^*), \\[2ex] 
\displaystyle i \dot{w}_j - \dot{\theta} w_j = \frac{\partial \mathcal{H}}{\partial w_j^*} (a,w,w^*), \quad j \neq 0.
\end{array}
\right.
 $$
As $a\in \R_+$, taking the real part of the first equation in the previous system shows that
\begin{equation} \label{eq_theta}
  \dot{\theta}= - \frac{1}{2a} \frac{\partial \mathcal{H}}{\partial a} (a,w,w^*),   
  \end{equation}
so $\theta$ is well controlled by $a$ and $w$.
Inserting this equation in the equation for $w_j$, we then get the equation on the $j$-th mode:
\begin{equation} \label{j_mode_eq}
i \dot{w}_j =   \frac{\partial \mathcal{H}}{\partial w^*_j} (a,w,w^*) - \frac{w_j}{2a} \frac{\partial \mathcal{H}}{\partial a}(a,w,w^*).
\end{equation}
Now as $a = \sqrt{1 - \Norm{w}{L^2}^2}$ we also have
\[  \frac{\partial a}{\partial w^*_j} = \frac{- w_j}{2a},  \]
and therefore the equations of motion \eqref{j_mode_eq} for $w=(w_j)_{j \in \mathcal{Z}}$ are Hamiltonian, namely
\begin{equation} \label{equation_motion_logNLS}
i \dot{w}_j=\frac{ \partial \hat{H}}{\partial w^*_j}(w,w^*), \quad j \in \Zc, \quad \hat H(w,w^*) = \Hc(a, w , w^*), 
\end{equation}
with the real-valued Hamiltonian
\begin{align*}
 \hat{H}(w, w^*)&=  \Hc(a,w,w^*) \\
 &= \Norm{\nabla w}{L^2}^2 + \frac{\lambda}{(2\pi)^d}\int_{\T^d} |a + w|^2 (\log( a + w) + \log (a + w^*) - 1)  dx .	
 \end{align*}

As in \cite{faou2012}, we can use the expansion \eqref{lelog} for the logarithm as well as the expansion 
\[ a= 1+ \sum_{m\geq 1} \frac{\frac{1}{2} \left(\frac{1}{2}-1\right) \ldots \left( \frac{1}{2} - m +1\right)}{ m!} \left( \sum_{j \neq 0} |w_j|^{2} \right)^m\]
and
\[ \frac{1}{a}= 1+ \sum_{m\geq 1} \frac{-\frac{1}{2} \left(-\frac{1}{2}-1\right) \ldots \left( -\frac{1}{2} - m +1\right)}{ m!} \left( \sum_{j \neq 0} |w_j|^{2} \right)^m,   \]
to obtain a representation of the form 
\begin{align*}
\hat{H}(w, \overline{w}) &= \Norm{\nabla w}{L^2}^2 + \lambda  \Norm{\psi}{L^2}^2 + \frac{\lambda}{2}\frac{1}{(2\pi)^d} \int_{\T^d} (w^2 + \overline w^2) \dd w + \Pc(w,\bar w),
\end{align*}
\[ \text{ with } \ \Pc(w, \overline{w}) = \sum_{r\geq 3} \Pc_r(w,\overline{w}), \]
where $\Pc_r(w,\overline{w})$ denotes a polynomial of degree $r$ of the form
\[ \Pc_r(w,\overline{w}) = \sum_{p+q=r} \sum_{\substack{(j,\ell) \in \mathcal{Z}^p \times  \mathcal{Z}^q \\ \mathcal{M}(j,\ell)=0}} \Pc_{k,\ell} w_{j_1} \ldots w_{j_p} \overline{w}_{\ell_1} \ldots \overline{w}_{\ell_q},  \]
and
\begin{equation} \label{momentum}
 \mathcal{M}(j,\ell)= j_1 + \ldots + j_p -\ell_1 - \ldots - \ell_q
\end{equation}
denotes the momentum of the multi-index $(k,\ell)$, and that  the Taylor expansion of the Hamiltonian $\Pc$ contains only terms with zero momentum. Moreover, this Hamiltonian function is smooth on $\Bc_s(r_0) := \{ w \in H^s\, | \, \Norm{w}{H^s} \leq r_0\}$ for $r_0$ small enough owing to estimates of the form 
\[ |\Pc_{k,\ell}| \leq M L^{p+q},  \]
which is coming from the analyticity of the logarithm expansion \eqref{lelog} and of the function~$w \mapsto \sqrt{1 - \Norm{w}{L^2}^2}$. 
The equation for $w$ can thus be written (compare \eqref{eqw})
$$
i \partial_t w = \frac{\partial \hat H}{\partial w^*} (w, w^*) = - \Delta w + \lambda w + \lambda w^* + \frac{\partial \Pc}{\partial w^*} (w, w^*), 
$$
or equivalently, for $j \in \Zc$, 
$$
i \partial_t w_j = (|j|^2 + \lambda) w_j + \lambda \overline{w}_{-j} + \frac{\partial \Pc}{\partial \overline w_j} (w, \overline w), 
$$
owing to the fact that $(w^*)_{j} = \overline w_{-j}$. 

\subsection{Diagonalization and non-resonant frequencies}
The linear part in the differential equation \eqref{equation_motion_logNLS} for $w_j$ is $(|j|^2 + \lambda) w_j + \lambda \overline{w}_{-j}$. By using the same strategy as in the previous section, taking the equation for $w_j$ together with that for $\overline{w}_{-j}$, we are thus led to consider the matrix (for $n= |j|^2 \geq 1$)
\[ A_n= \left( \begin{array}{cc} n + \lambda & \lambda \\
                                - \lambda & -n-\lambda                        
                        \end{array} \right). \]
Hence we have the eigenvalues $\pm \sqrt{n^2+2 \lambda n}$ which are all real if we assume $\lambda  > -\frac{1}{2}$, which will be the case from now on.

\begin{lemma} \label{lemma_change_variables}
Let $\lambda >-1/2$. Then, for all $n \geq 1$, the matrix $A_n$ is diagonalized by a $2 \times 2$ matrix $S_n$ that is real symplectic and hermitian and has condition number smaller than 2:
\[ S_n^{-1} A_n S_n= \left( \begin{array}{cc} \Omega_n & 0 \\
                                0 & -\Omega_n                        
                        \end{array} \right)  
                        \ \ \ \text{with} \ \ \ \Omega_n= \sqrt{n^2+2 \lambda n}.  \]
\end{lemma}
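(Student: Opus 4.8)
The plan is to diagonalize $A_n$ by hand, using that it is traceless, and then to read off the three required properties (symplecticity, hermiticity, conditioning) directly from the explicit entries. First I would record the spectral data: since $\mathrm{tr}(A_n) = 0$ and $\det(A_n) = -(n+\lambda)^2 + \lambda^2 = -(n^2 + 2\lambda n) = -\Omega_n^2$, the characteristic polynomial is $x^2 - \Omega_n^2$, whence the eigenvalues are $\pm \Omega_n$. The hypothesis $\lambda > -\tfrac12$ together with $n \geq 1$ gives $n^2 + 2\lambda n = n(n + 2\lambda) > 0$, so $\Omega_n$ is real and nonzero and the two eigenvalues are real and distinct; hence $A_n$ is diagonalizable over $\R$.

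Next I would exhibit eigenvectors explicitly. Solving $(A_n \mp \Omega_n I)v = 0$ yields $v_+ \propto (n+\lambda+\Omega_n,\, -\lambda)^{T}$ for the eigenvalue $\Omega_n$ and $v_- \propto (-\lambda,\, n+\lambda+\Omega_n)^{T}$ for $-\Omega_n$ (both checks amount to the identity $(n+\lambda)^2 - \lambda^2 = \Omega_n^2$). Assembling these as columns and dividing by $N_n := \sqrt{2\Omega_n(\Omega_n + n + \lambda)}$ produces
\[
S_n = \frac{1}{N_n}\begin{pmatrix} n+\lambda+\Omega_n & -\lambda \\ -\lambda & n+\lambda+\Omega_n \end{pmatrix},
\]
which by construction satisfies $S_n^{-1} A_n S_n = \mathrm{diag}(\Omega_n, -\Omega_n)$. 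This matrix is manifestly real and symmetric, hence hermitian. For symplecticity I would invoke the fact that in dimension two $Sp(2,\R) = SL(2,\R)$, so it suffices to verify $\det S_n = 1$; this reduces to the single algebraic identity $(n+\lambda+\Omega_n)^2 - \lambda^2 = 2\Omega_n(\Omega_n + n + \lambda) = N_n^2$, which is again just $\Omega_n^2 = n^2 + 2\lambda n$ rearranged. The normalization $N_n$ is chosen precisely to make this determinant equal to one.

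Finally, the conditioning. Because $S_n$ is real symmetric, its singular values equal the absolute values of its eigenvalues, and the eigenvalues of the displayed matrix are $(n+\Omega_n)/N_n$ and $(n+2\lambda+\Omega_n)/N_n$, both positive with product $\det S_n = 1$. Hence the condition number is $\kappa(S_n) = \max\!\big(\tfrac{n+2\lambda+\Omega_n}{n+\Omega_n},\, \tfrac{n+\Omega_n}{n+2\lambda+\Omega_n}\big)$, so the bound $\kappa(S_n) < 2$ reduces to an elementary inequality comparing $2|\lambda|$ with $n + \Omega_n$. Since $n + \Omega_n$ is increasing in $n$, the worst case is $n = 1$, and I would conclude by checking that inequality there.

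I expect this last quantitative estimate to be the only delicate point, and it is where the hypothesis $\lambda > -\tfrac12$ is really used: as $\lambda \downarrow -\tfrac12$ the eigenvalues $\pm\Omega_1$ of $A_1$ coalesce and $N_1 \to 0$, so the diagonalization becomes ill-conditioned. Keeping $\Omega_n$ bounded away from $0$ (for fixed $\lambda$) is exactly what controls $\kappa(S_n)$ uniformly in $n$, which is the property the subsequent argument needs in order to transfer $H^s\times H^s$ estimates between the $(\xi,\eta)$ and $V$ variables.
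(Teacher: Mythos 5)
Your construction is essentially identical to the paper's: the matrix you assemble is exactly the $S_n$ of the paper's proof, since your normalization satisfies $N_n^2 = 2\Omega_n(\Omega_n+n+\lambda) = (n+\Omega_n)(n+2\lambda+\Omega_n)$, and your eigenvalue, eigenvector and hermiticity computations are all correct. Your route to symplecticity (in dimension two $S^{T}JS=J$ is equivalent to $\det S = 1$) is in fact a cleaner packaging of the paper's direct verification.

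The genuine problem is the step you defer to the end: the inequality you propose to ``check at $n=1$'' is false for a range of admissible $\lambda$, so that step cannot be completed as stated. Concretely, write $\Omega_n=\sqrt{n}\sqrt{n+2\lambda}$; the two positive eigenvalues of $S_n$ are $\sqrt{(n+\Omega_n)/(n+2\lambda+\Omega_n)}$ and its reciprocal, and the ratio telescopes:
\[
\kappa(S_n)=\max\left(\frac{n+2\lambda+\Omega_n}{n+\Omega_n},\ \frac{n+\Omega_n}{n+2\lambda+\Omega_n}\right)
=\max\left(\sqrt{\frac{n+2\lambda}{n}},\ \sqrt{\frac{n}{n+2\lambda}}\right).
\]
At $n=1$ and $\lambda\geq 0$ this equals $\sqrt{1+2\lambda}$, which reaches $2$ at $\lambda=3/2$ and exceeds it beyond; for $\lambda<0$ it equals $1/\sqrt{1+2\lambda}$, which is at least $2$ once $\lambda\leq -3/8$. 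So the bound ``condition number smaller than $2$'' holds only for $\lambda\in(-3/8,\,3/2)$, and your worst-case-at-$n=1$ strategy, though correctly set up, runs into the fact that the constant $2$ is simply not available for all $\lambda>-1/2$. (You are in good company: the paper's own proof exhibits $S_n$, $S_n^{-1}$ and checks symplecticity, but never estimates the condition number at all, so this defect of the lemma's statement goes unnoticed there.) What is true, and is all that the subsequent argument needs in order to transfer $H^s\times H^s$ estimates between the $w$ and $\xi$ variables, is the $\lambda$-dependent bound
\[
\sup_{n\geq 1}\kappa(S_n)\leq \max\left(\sqrt{1+2\lambda},\ \frac{1}{\sqrt{1+2\lambda}}\right)<\infty,
\]
uniform in $n$, which follows at once from the displayed formula since the worst case is $n=1$. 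Your closing remark about the degeneration as $\lambda\downarrow -\tfrac12$ is correct and identifies exactly the right mechanism; the fix is to replace the constant $2$ by this $\lambda$-dependent constant (or to restrict $\lambda$ to a compact subinterval), not to hope the inequality at $n=1$ goes through.
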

\begin{proof}
By direct diagonalization we have
\[ S_n= \frac{1}{\sqrt{(n+\Omega_n)(n+2\lambda+\Omega_n) }} \left( \begin{array}{cc} n+\lambda + \Omega_n & -\lambda \\ - \lambda & n+\lambda + \Omega_n \end{array} \right)   \]
and
\[ S_n^{-1}= \frac{1}{\sqrt{(n+\Omega_n)(n+2\lambda+\Omega_n) }} \left( \begin{array}{cc} n+\lambda + \Omega_n & \lambda \\  \lambda & n+\lambda + \Omega_n \end{array} \right), \]
which are indeed real and self-adjoint. Denoting 
\[ J= \left( \begin{array}{cc} 0 & 1 \\  -1 & 0 \end{array} \right), \]
we can also easily check that $S_n$ and $S_n^{-1}$ are in fact symplectic, namely
\[ S_n^{T} J S_n = J \ \ \ \text{and} \ \ \ \left( S_n^{-1}\right)^{T} J S_n^{-1} =J.  \]
\end{proof}

Using the real symplecticity of the matrix $S_n^{-1}$ of Lemma \ref{lemma_change_variables}, we make the symplectic change of variables
\[ \left( \begin{array}{c} \xi_j \\ \overline{\xi}_{-j}  \end{array}  \right) = S_n^{-1}  \left( \begin{array}{c} w_j \\ \overline{w}_{-j}  \end{array}  \right)  \]
for $j \in \Z^d\backslash \left\{0 \right\}$ and $n=|j|^2 \geq 1$. This linear transformation applied to the Hamiltonian system \eqref{equation_motion_logNLS} gives the new Hamiltonian system
\begin{equation}
\label{new_eq_motion_logNLS} i \frac{\dd}{\dd t} \left( \xi_j(t)  \right) =  \frac{\partial \tilde{H}}{\partial \overline{\xi}_j} \left( \xi(t), \overline{\xi}(t)  \right) 
\end{equation}
and define a new Hamiltonian $\tilde{H}(\xi,\overline{\xi})=H(w,\overline{w})$ which wan be written under the form
\[ \tilde{H}(\xi,\overline{\xi})= H_0 + P =  \sum_{j \neq 0} \omega_j | \xi_j|^2 + P( \xi, \overline{\xi}),   \]
where we denote the frequencies $\omega_j=\Omega_n=\sqrt{n^2+2 \lambda n}$ for $|j|^2=n \geq 1$, and the non-quadratic term $P$ is of the form
\begin{equation} \label{eq_P}
  P( \xi, \overline{\xi}) = \sum_{p+q \geq 3} \sum_{\substack{(j,\ell) \in \mathcal{Z}^p \times  \mathcal{Z}^q \\ \mathcal{M}(j,\ell)=0}} P_{k,\ell} \xi_{k_1} \ldots \xi_{k_p} \overline{\xi}_{\ell_1} \ldots \overline{\xi}_{\ell_q},  
\end{equation}
where the sum is still only over multi-indices with zero momentum \eqref{momentum}, since the
transformation mixes only terms that give the same contribution to the momentum. From the properties of the original Hamiltonian $\Pc$ and the bound on the transformations $S_n$, we can state the following bound: 
for $k \in \mathcal{Z}^p$, $\ell \in \mathcal{Z}^q$,  the coefficients in \eqref{eq_P} are bounded by
\[ |P_{k,\ell}| \leq M L^{p+q},  \]
for some constants $M$ and $L$ independent of $p$ and $q$. 
%

\begin{remark}
\label{rem1}
The $H^s$-norm of the sequence $\xi$ is equivalent to the Sobolev norm of the perturbation of the $m$-th plane-wave of Theorem \ref{theorem_plane_wave} in the case $\rho = 1$ namely
\[ c \|\xi \|_{H^s(\T^d)} \leq \| \psi(t,.) - e^{i \theta} \|_{H^s(\T^d)} \leq  c \|\xi \|_{H^s(\T^d)},  \]
where $c$ and $C$ denote two positive constants depending on $\lambda$. In particular, under the assumptions of
Theorem \ref{theorem_plane_wave}, the system \eqref{new_eq_motion_logNLS} has small initial values whose $H^s$-norm is of order $\eps$.
\end{remark}

\subsection{Non resonance condition and normal form}

We are now going to prove that the frequencies $(\Omega_n)_n$ of our linearized system statisfy Bambusi's non-resonance inequality \cite{bambusi2003} for almost all values of $\lambda$:
\begin{lemma} \label{lemma_non_resonance_frenquencies}
Let $\lambda_- >-1/2$, $\lambda_+> \max(\lambda_-,1/2)$ and $r>1$. There exists $\alpha=\alpha(r)>0$ and a set of full Lebesgue measure $\Lambda \subset \left[ \lambda_-, \lambda_+\right]$ such that for every $\lambda \in \Lambda$ there is a $\gamma > 0$ such that, for all integers $p$, $q$ with $p+q \leq r$ and for all $m=(m_1, \ldots, m_p) \in \N^p$ and $n=(n_1, \ldots, n_q) \in \N^q$,
\begin{equation} \label{nonresonance}
    | \Omega_{m_1} + \hdots + \Omega_{m_p} - \Omega_{n_1} - \hdots - \Omega_{n_q} | \geq \frac{\gamma}{\mu_3(m,n)^{\alpha}},
\end{equation}
except if the frequencies cancel pairwise. Here, $\mu_3(m,n)$ denotes the third-largest among the integers $m_1, \ldots, m_p$, $n_1, \ldots, n_q$.
\end{lemma}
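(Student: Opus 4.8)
The plan is to realize $\Lambda$ as the complement of a measure-zero set, constructed as a countable intersection of "good sets" by a Borel--Cantelli type argument, the quantitative input being a uniform sublevel-set estimate for the real-analytic frequency functions $\lambda \mapsto \Omega_n(\lambda) = \sqrt{n^2 + 2\lambda n}$.

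First I would reformulate the left-hand side of \eqref{nonresonance}. Grouping equal indices, a combination $\Omega_{m_1} + \cdots + \Omega_{m_p} - \Omega_{n_1} - \cdots - \Omega_{n_q}$ equals $g_c(\lambda) := \sum_n c_n \Omega_n(\lambda)$ for an integer vector $c = (c_n)$ with $\sum_n |c_n| \le r$, so that at most $r$ distinct indices occur. Since for $\lambda > -1/2$ and $n \ne n'$ one has $\Omega_n \ne \Omega_{n'}$ (because $\Omega_n^2 - \Omega_{n'}^2 = (n-n')(n+n'+2\lambda) \ne 0$), the exceptional case ``the frequencies cancel pairwise'' is exactly $c = 0$; thus it suffices to bound $|g_c(\lambda)|$ from below for every $c \ne 0$. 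For fixed $\gamma_0 > 0$ I would set
\[
\mathcal{B}(\gamma_0) = \bigcup_{c \ne 0,\ \sum_n|c_n| \le r} \Big\{ \lambda \in [\lambda_-,\lambda_+] : |g_c(\lambda)| < \frac{\gamma_0}{\mu_3(c)^{\alpha}} \Big\},
\]
and aim at $|\mathcal{B}(\gamma_0)| \le C \gamma_0^{1/r}$. Then $\Lambda := \bigcup_{k\ge 1} \big([\lambda_-,\lambda_+]\setminus \mathcal{B}(1/k)\big)$ has full measure, since its complement $\bigcap_k \mathcal{B}(1/k)$ is null, and any $\lambda \in \Lambda$ satisfies \eqref{nonresonance} with $\gamma = 1/k$ for some $k$.

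The non-degeneracy engine is the Wronskian of the $\Omega_n$. Using $\partial_\lambda^j \Omega_n = c_j \sqrt{n}\,(n + 2\lambda)^{1/2 - j}$ with $c_j = 2^j \tfrac12(\tfrac12-1)\cdots(\tfrac12-j+1) \ne 0$, the Wronskian of any subfamily $\{\Omega_{k_1}, \ldots, \Omega_{k_M}\}$ factorizes: pulling out $c_j$ from each row and $\Omega_{k_i} = \sqrt{k_i(k_i+2\lambda)}$ from each column leaves a Vandermonde determinant in the variables $x_i = (k_i + 2\lambda)^{-1}$. Hence it equals a nonzero multiple of $\prod_i \Omega_{k_i} \cdot \prod_{i<i'} (k_i - k_{i'})/\big[(k_i+2\lambda)(k_{i'}+2\lambda)\big]$, which is explicitly nonvanishing for distinct indices and $\lambda > -1/2$. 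This provides a quantitative lower bound $\delta_c$ on some derivative $g_c^{(j)}$, $0 \le j \le M-1 \le r-1$, at every $\lambda$, and the classical sublevel-set lemma (a $C^m$ function with a nonvanishing derivative of bounded order has small sublevel sets) then yields $|\{|g_c| < \rho\}| \le C_r (\rho/\delta_c)^{1/r}$.

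The main obstacle --- and the reason the right-hand side of \eqref{nonresonance} is governed by the third largest index --- is that this Wronskian bound $\delta_c$ degenerates polynomially in the two largest indices, so the raw measure estimate is not summable over all $c$. To remedy this I would peel off the two largest indices using the asymptotics $\Omega_n = n + \lambda - \lambda^2/(2n) + O(n^{-2})$: if the two largest indices $k_1 \ge k_2$ are incomparable, or separated by more than a constant multiple of $\mu_3$, the leading term $n + \lambda$ forces $|g_c|$ below by a positive constant and the bad set is empty; in the remaining regime $|k_1 - k_2| \lesssim \mu_3$ with $k_1,k_2 \to \infty$, the difference $\Omega_{k_1} - \Omega_{k_2}$ is an integer plus an $O(1/k_2)$ remainder, reducing the problem to a combination supported on indices of size comparable to $\mu_3$ with a controlled error, for which $\delta_c$ is bounded below in terms of $\mu_3$ alone. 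Summing the resulting measure bounds over all index configurations, the decay in the two large indices together with a sufficiently large $\alpha = \alpha(r)$ makes the series converge and gives $|\mathcal{B}(\gamma_0)| \le C \gamma_0^{1/r}$. The delicate point throughout is keeping every constant uniform over the infinitely many admissible configurations, which is precisely what the $\mu_3$-peeling is designed to guarantee.
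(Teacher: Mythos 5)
Your proposal is not, at bottom, a different route from the paper's: the paper does not prove this lemma directly, but reduces it by the substitution $\lambda_0\rho^2 \mapsto \lambda$ to Lemma 2.2 of \cite{faou2012}, and the proof of that cited lemma has exactly the skeleton you describe (integer combinations of the $\Omega_n$, Wronskian/Vandermonde nondegeneracy in the parameter, sublevel-set estimates, peeling of the two largest indices via the asymptotics of $\Omega_n$, Borel--Cantelli). Your individual ingredients are correct: pairwise cancellation is indeed equivalent to $c=0$ because $n\mapsto\Omega_n$ is injective for $\lambda>-1/2$; the formula $\partial_\lambda^j\Omega_n = c_j\sqrt{n}\,(n+2\lambda)^{1/2-j}$ and the resulting Vandermonde factorization of the Wronskian are right; so are the expansion $\Omega_n = n+\lambda-\lambda^2/(2n)+O(n^{-2})$ and the final measure-theoretic construction of $\Lambda$.

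The gap is the sentence ``Summing the resulting measure bounds over all index configurations, the decay in the two large indices together with a sufficiently large $\alpha=\alpha(r)$ makes the series converge.'' That is where the entire difficulty of the lemma sits, and as stated the step fails. After peeling, the union of the bad sets over the two largest indices $k_1\ge k_2$ with $k_1-k_2=\ell$ fixed collapses into the single reduced bad set $\bigl\{|\ell+\tilde g|<2\gamma_0\mu_3^{-\alpha}\bigr\}$ (with $\tilde g$ supported on indices $\le \mu_3$) only when the peeling error $O(\mu_3/k_2^2)$ is below the threshold $\gamma_0\mu_3^{-\alpha}$, i.e. only for $k_2\gtrsim(\mu_3^{1+\alpha}/\gamma_0)^{1/2}$. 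In the remaining range $\mu_3\ll k_2\lesssim(\mu_3^{1+\alpha}/\gamma_0)^{1/2}$ you are forced to a union bound over roughly $(\mu_3^{1+\alpha}/\gamma_0)^{1/2}\,\mu_3$ pairs, each of measure $\lesssim(\gamma_0\mu_3^{P-\alpha})^{1/M}$, where $\mu_3^{-P}$ is the Wronskian nondegeneracy constant and $1/M$ (with $M\le r$) the sublevel-set exponent. The product is of order $\gamma_0^{1/M-1/2}\,\mu_3^{\alpha(1/2-1/M)+C(r,P)}$: as soon as $M\ge 2$ this does not tend to $0$ as $\gamma_0\to 0$, and increasing $\alpha$ enlarges the problematic range at least as fast as it shrinks each term, so the series over $\mu_3$ diverges and no choice of $\alpha(r)$ rescues the naive summation. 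Closing this requires an extra device --- for instance splitting at an optimized intermediate threshold $K=(\mu_3^{1+\alpha}/\gamma_0)^{1/(M+2)}$, using the union bound below $K$ and the containment above $K$, which yields the smaller but still positive power $\gamma_0^{2/(M(M+2))}$ (any positive power suffices for your Borel--Cantelli step) --- or the finer case analysis actually carried out in \cite{faou2012}. As written, your proposal does not close this step, and it is precisely the step for which the paper defers to the literature.
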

\begin{proof}
The proof is exactly the same as in the cubic nonlinear Schr\"odinger case, and can be found in \cite[Lemma 2.2]{faou2012}. Indeed, in this latter reference, the frequencies where under the form 
$\sqrt{n^4 + 2 n^2 \lambda_0 \rho^2 }$ where $\lambda_0$ was fixed and $\rho$ the varying parameter. The condition \eqref{nonresonance} is thus a consequence of this result by replacing $\lambda_0 \rho^2$ by the varying parameter $\lambda$ in the proof of \cite[Lemma 2.2]{faou2012}. 
Note that the arguments are similar to the one used in  \cite{bambusi2003,bambusi2006,xu1997}. 
\end{proof}
We are now in position to apply the normal form result, Theorem 7.2 of \cite{grebert2007} (see also \cite{bambusi2006}). 
We denote the ball of  radius $r > 0$: 
\[  \mathcal{O}_s(r) := \enstq{(\xi,\overline{\xi}) \in \C^{\mathcal{Z}} \times \C^{\mathcal{Z}}}{ \Norm{ \xi}{H^s }\leq r},  \]
where the Sobolev norm $\Norm{ \cdot}{H^s}$ can be written 
\[ \Norm{ \xi }{H^s} = \sum_{n \geq1} n^s J_n(\xi,\bar \xi) \ \ \ \text{with the super-actions} \ \ \ J_n(\xi,\bar \xi) =\sum_{|j|^2=n} |\xi_j|^2. \]
For a given Hamiltonian $K \in \mathcal{C}^{\infty}(\mathcal{O}_s(r),\C)$ satisfying $K(\xi,\overline{\xi}) \in \R$, we denote by $X_{K}(\xi,\overline{\xi})$ the Hamiltonian vector field
\[ X_k(\xi,\overline{\xi})_j= \left( i \frac{\partial K}{ \partial  \xi_j},-i \frac{ \partial K}{\partial \overline{\xi}_j}  \right), \ \ \ j \in \mathcal{Z},  \]
associated with the Poisson bracket
\[ \left\{ K, G \right\} = i \sum_{j \in \mathcal{Z}} \frac{\partial K }{\partial \xi_j} \frac{\partial G }{\partial \overline{\xi}_j} - \frac{\partial K }{\partial \overline{\xi}_j} \frac{\partial G }{\partial \xi_j}.   \]
We can now apply Theorem 7.2 of \cite{grebert2007}, which can be formulate into our settings with the following theorem:
\begin{theorem} \label{theorem_normal_form}
Let $\lambda$ be in the set $ \Lambda$ of full measure as given in Lemma \ref{lemma_non_resonance_frenquencies} for some $N \geq 3$. There exists $s_0$ such that for any $s \geq s_0$ there exists two neighborhoods $\mathcal{U} = \mathcal{O}_s(r_0)$ and $\mathcal{V} = \mathcal{O}_s(r_1)$ of the origin and a canonical transformation $\tau : \mathcal{V} \rightarrow \mathcal{U}$ which puts $\tilde{H}=H_0 +P$ in normal form up to order $N$, $i.e.$,
\[ \tilde{H} \circ \tau = H_0 + Z + R,   \]
where
\begin{itemize}
 \item $H_0=\sum_{j \in \mathcal{Z}} \omega_j | \xi_j|^2$,
 \item $Z$ is a polynomial of degree $N$ which commutates with all the super-actions, namely $\left\{ Z, J_n \right\}=0$ for all $n \geq 1$,
\item $R \in \mathcal{C}^{\infty}( \mathcal{V},\R)$ and $ \Norm{ X_R(\xi,\overline{\xi}}{H^s} \leq C_s \Norm{ \xi }{H^s}^N$ for $\xi \in \mathcal{V}$,
\item $\tau$ is close to the identity: $\Norm{  \tau(\xi,\overline{\xi}) - (\xi,\overline{\xi}) }{H^s} \leq C_s \Norm{ \xi }{H^s}^2$ for all $\xi \in \mathcal{V}$.
\end{itemize}
\end{theorem}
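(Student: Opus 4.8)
The plan is to check that the Hamiltonian $\tilde H = H_0 + P$ assembled in the previous subsections meets every structural hypothesis of the abstract Birkhoff normal form theorem of Gr\'ebert (Theorem 7.2 of \cite{grebert2007}), and then simply to quote that theorem. Three ingredients have to be matched: the asymptotic growth of the frequencies $\omega_j$, the non-resonance condition \eqref{nonresonance}, and the membership of $P$ in the class of real, zero-momentum, \emph{tame} Hamiltonians whose vector fields act smoothly on $H^s$.

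First I would record the frequency asymptotics. Since $\omega_j = \Omega_{|j|^2} = \sqrt{|j|^4 + 2\lambda |j|^2}$ with $\lambda > -\tfrac12$, a Taylor expansion gives $\omega_j = |j|^2 + \lambda + \mathcal{O}(|j|^{-2})$, so the frequencies grow like $|j|^2$ up to a bounded correction; this is exactly the dispersion profile handled by the abstract framework of \cite{grebert2007,bambusi2006}. The small-divisor estimate is then supplied verbatim by Lemma \ref{lemma_non_resonance_frenquencies}: for $\lambda$ in the full-measure set $\Lambda$ and for $p+q \le N$, the bound \eqref{nonresonance} holds with a loss governed by the third-largest index $\mu_3(m,n)$, which is precisely the non-resonance format required by the theorem.

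Next I would verify the regularity of the perturbation. The Hamiltonian $\tilde H$ is real-valued, being the physical energy rewritten in the new coordinates, and by construction (see \eqref{eq_P}) every monomial of $P$ satisfies the zero-momentum relation $\mathcal{M}(k,\ell)=0$. The coefficient bound $|P_{k,\ell}| \le M L^{p+q}$ --- inherited from the analyticity of the logarithmic series \eqref{lelog} and of $w \mapsto \sqrt{1 - \Norm{w}{L^2}^2}$ --- together with the algebra property \eqref{algebra} valid for $s > d/2$, shows that each homogeneous piece of degree $r$ has a Hamiltonian vector field bounded on $\Bc_s(r_0)$ by a constant times $\Norm{\xi}{H^s}^{r-1}$, the series converging for $\Norm{\xi}{H^s}$ small. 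This places $P$ in the regularity class of \cite{grebert2007}. With these three facts in hand, Theorem 7.2 applies: the transformation $\tau$ is produced as the time-one flow of an auxiliary Hamiltonian obtained by solving the homological equations order by order, the surviving resonant part $Z$ commutes with every super-action $J_n$ (so that $\{Z,J_n\}=0$), and the estimates on $\tau - \mathrm{id}$ and on $X_R$ are the standard output of the iteration. The threshold $s_0$ is fixed by $N$ and the non-resonance exponent $\alpha(r)$ from Lemma \ref{lemma_non_resonance_frenquencies}.

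The main obstacle is not a single difficult estimate but the faithful translation of our concrete setting into the functional-analytic class of \cite{grebert2007}: one must confirm that the tame $H^s$ estimates and the uniform-in-degree coefficient bounds $|P_{k,\ell}| \le M L^{p+q}$ genuinely imply membership in that class, so that the formal normal form procedure converges up to order $N$ and leaves a remainder $R$ whose vector field is of size $\Norm{\xi}{H^s}^N$. Once this identification is secured, the four stated conclusions follow directly from the cited theorem, with no further computation specific to the logarithmic nonlinearity required.
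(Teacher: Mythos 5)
Your proposal matches the paper's own treatment: the paper likewise obtains this result by directly invoking Theorem 7.2 of \cite{grebert2007}, noting that the hypothesis verification (non-resonance via Lemma \ref{lemma_non_resonance_frenquencies}, reality, zero momentum, and the coefficient bounds $|P_{k,\ell}|\leq ML^{p+q}$) is standard and can be carried out as in \cite{faou2012}. Your write-up simply makes that verification slightly more explicit, so it is correct and essentially identical in approach.
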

The verification of the hypotheses for our Hamiltonian $\tilde{H}$ is pretty standard and can be performed as in \cite{faou2012}. The consequence of this Theorem are well known from \cite{grebert2007,bambusi2006}: there exists $\varepsilon_0$ such that if $\Norm{\xi(0)}{H^s} \leq \varepsilon < \varepsilon_0$, then we have $\Norm{\xi(t)}{H^s} \leq 2 \varepsilon$ for a time $t \leq \varepsilon^{-N}$. As a consequence, the same result holds true for $w$ solution of the system \eqref{equation_motion_logNLS} up to changes of the constants. This implies that  $a = 1 + \mathcal{O}(\Norm{w}{L^2}^2) \leq C \varepsilon^2$ and $|\dot \theta| = \mathcal{O} \left(\Norm{w}{L^2}^2 \right) \leq C \varepsilon^2$ for $t \leq \varepsilon^{-N}$, and we can then conclude the proof owing to Remark \ref{rem1}. 
\section{Numerical simulations}
In this section we will perform a semi-discretization in time with a Lie-Trotter splitting method of the nonlinear Schr\"odinger-Langevin equation \eqref{SL_eq}. The operator splitting methods for the time integration of \eqref{SL_eq} are based on the following splitting 
 \[  \partial_t \psi = A( \psi ) + B ( \psi ) + C(\psi ),   \]
 where
 \[ A( \psi) =  i \Delta \psi, \ \ \ B ( \psi ) = -i \lambda \psi \log( |\psi|^2 ) , \ \ \ C(\psi )= -\frac{1}{2}  \mu  \psi \log\left( \frac{\psi}{\psi^*} \right), \]
and the solutions of the subproblems
\[ i  \partial_t u(t,x)=-   \Delta u(t,x), \ \ \ u(0,x)=u_0(x), \ \ \ x \in \T^d, \ \ \ t>0,      \]
\[ i  \partial_t v(t,x)= \lambda v(t,x) \log( v(t,x)) , \ \ \ v(0,x)=v_0(x), \ \ \ x \in \T^d, \ \ \ t>0,      \]
\[ i  \partial_t w(t,x)= \frac{ \mu}{2i}   w(t,x) \log\left( \frac{w(t,x)}{w^*(t,x)} \right), \ \ \ w(0,x)=w_0(x), \ \ \ x \in \T^d, \ \ \ t>0.      \]
The associated operators are explicitly given, for $t \geq 0$, by
\[  u(t,.)=\Phi^t_A(u_0)=e^{it \Delta} u_0, \]
\[  v(t,.)=\Phi^t_B(v_0)= v_0 e^{-it \log( |v_0|^2)} , \]
\[  w(t,.)=\Phi^t_C(w_0)=a_0 e^{i \theta_0 e^{- \mu t}}, \ w_0=a_0 e^{i \theta_0}. \]
Note that in our simulations the initial functions will be some small perturbations of plane waves and should stay away from zero over large times as induced by Theorem \ref{theorem_plane_wave} and Theorem \ref{theorem_plane_wave_SL}, so we do not need to saturate the logarithm nonlinearity by an $\eps$-approximation $\psi \mapsto \psi^{\eps} \log \left( |\psi^{\eps}|^2+ \eps \right)$ as performed in \cite{carles_bao_DF} or \cite{carles_bao_splitting}.\\

 All the numerical simulations will be made in dimension $d=1$ on the torus $\T = \left[-\pi ,\pi \right]$. The computation of $\Phi_A^t$ is made by a Fast Fourier Transformation. Let $K$ be a positive even integer and denote $\Delta x=2\pi/K$ and the grid points $x_j=-\pi+k \Delta x $ for $0 \leq k \leq K-1$. Denote by $\psi^{K,j}$ the discretized solution vector over the grid $(x_k)_{0 \leq k \leq K-1}$ at time $t=t_j= j \Delta t$, which can be written with the discrete Fourier ansatz
 \[  \psi^{K,j}(n)= \sum_{k =0}^{K-1} \psi_k^{K,j} e^{i n \cdot x_k}, \ \ \ 0 \leq n \leq K-1 .  \]
Let $\mathcal{F}_K$ and $\mathcal{F}_K^{-1}$ denote the discrete Fourier transform and its inverse, respectively. With this notation, $\Phi^t_A(\psi^{K,j})$ can be obtained by 
\[ \Phi^t_A(\psi^{K,j}) = \mathcal{F}_K^{-1} \left( e^{-i \Delta t ( \sigma^K)^2} \mathcal{F}_K( \psi^{K,j} )    \right),  \]
where
\[  \sigma^K = \left[ 0,1, \ldots, \left( \frac{K}{2} -1 \right), -\frac{K}{2}, \ldots,-1\right],  \]
and the multiplication of two vectors is taken as point-wise. In the following we will both plot the dynamics of the absolute value of the solution $\left(|\psi^{K,j}|\right)_{j}$ and the evolution of the actions $\left(|\psi_k^{K,j}|\right)_{j}$ for $0 \leq k \leq K-1$ over the discrete time $\left\{ 0, \ldots, T_{\max} \right\}$ with $T_{\max}=J \Delta t$ and $J \in \N^*$, which corresponds to the number of discretization points of time.  \\

In the following we perform our simulations with $K=2^{7}$ space discretization points on the interval $\T =\left[-\pi,\pi\right]$, using a time step $\Delta t =10^{-2}$ on the interval $\left[0,T_{\max} \right]$ with $T_{\max}=100$.  We also take the the initial function
\[  \psi_0(x)=\frac{1}{1 + 0.2 \cos(x)}. \]

\subsection{Evolution of the solution}
Here we plot the absolute value of the solution $\left(|\psi^{K,j}|\right)_{0\leq j \leq J}$. We take the constant $\lambda=0.5$ and we perform two simulations with $\mu$ either equal to 0 or 2. In the case without dissipation ($\mu =0$), we clearly observe the stability of the solution (Figure \ref{fig: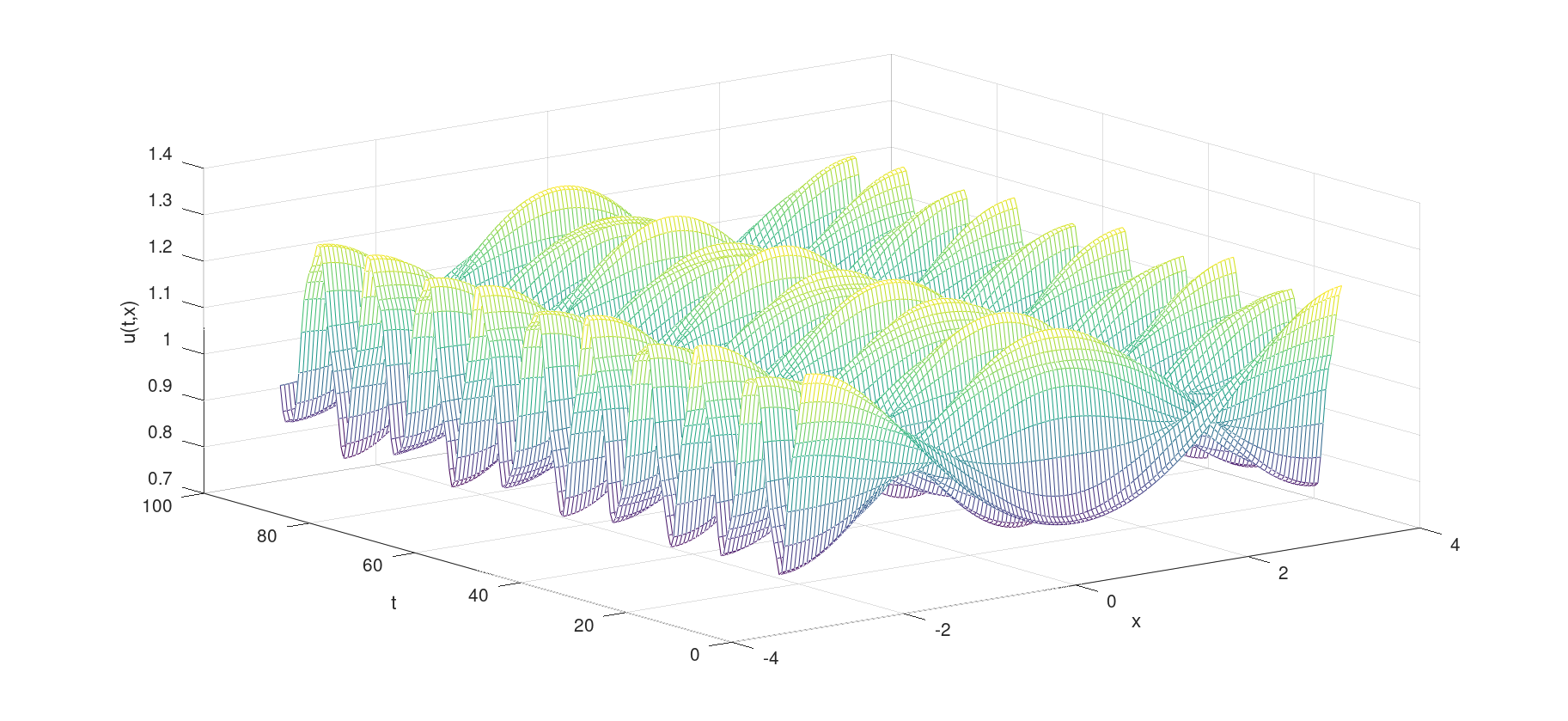}), whereas in the case with dissipation ($\mu =2$) we see that the solution converges quickly to its limit (Figure \ref{fig: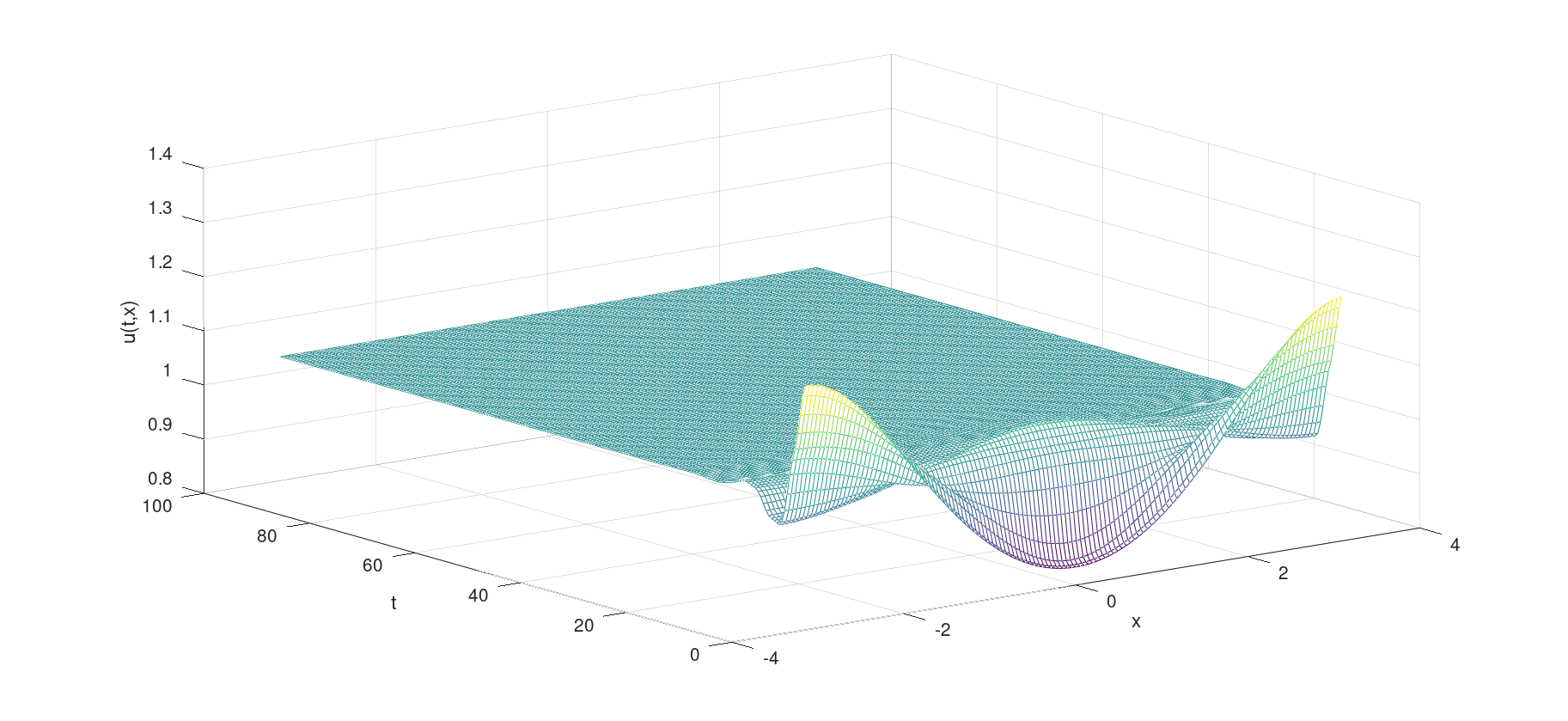}).

\begin{figure}[h]
	\centering
		\includegraphics[width=0.85\textwidth,trim = 0cm 1cm 0cm 1cm, clip]{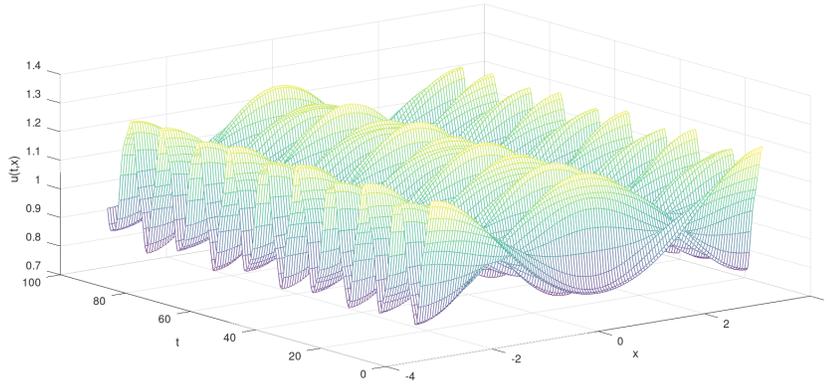}	
	\caption{Solution of equation \eqref{SL_eq} with initial datum $\psi_0$ ($\lambda=0.5$, $\mu=0$).}
	\label{fig:mu_0_evolution.png}
\end{figure}

\begin{figure}[h]
	\centering
		\includegraphics[width=0.85\textwidth,trim = 0cm 1cm 0cm 1cm, clip]{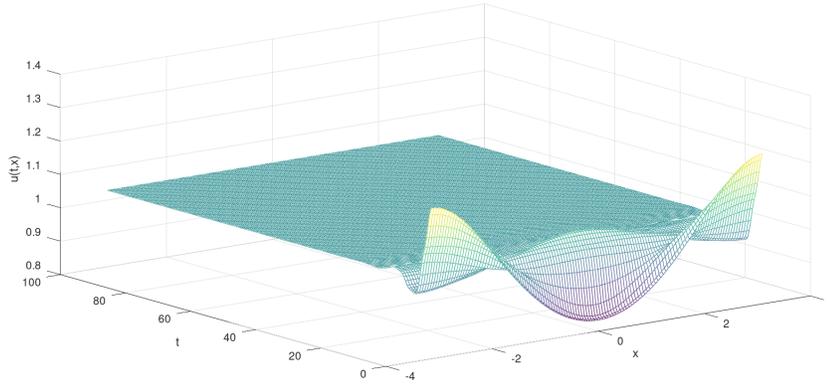}	
	\caption{Solution of equation \eqref{SL_eq} with initial datum $\psi_0$ ($\lambda=0.5$, $\mu=2$).}
	\label{fig:mu_2_evolution.png}
\end{figure}

\subsection{Evolution of the actions}
We now plot the evolution of the actions $\left(|\psi_k^{K,j}|\right)_{j}$ with $0 \leq k \leq K-1$, in logarithmic scale. We will first take the constant $\lambda=0.5$ and $\mu$ equal to $0$ (Figure \ref{fig: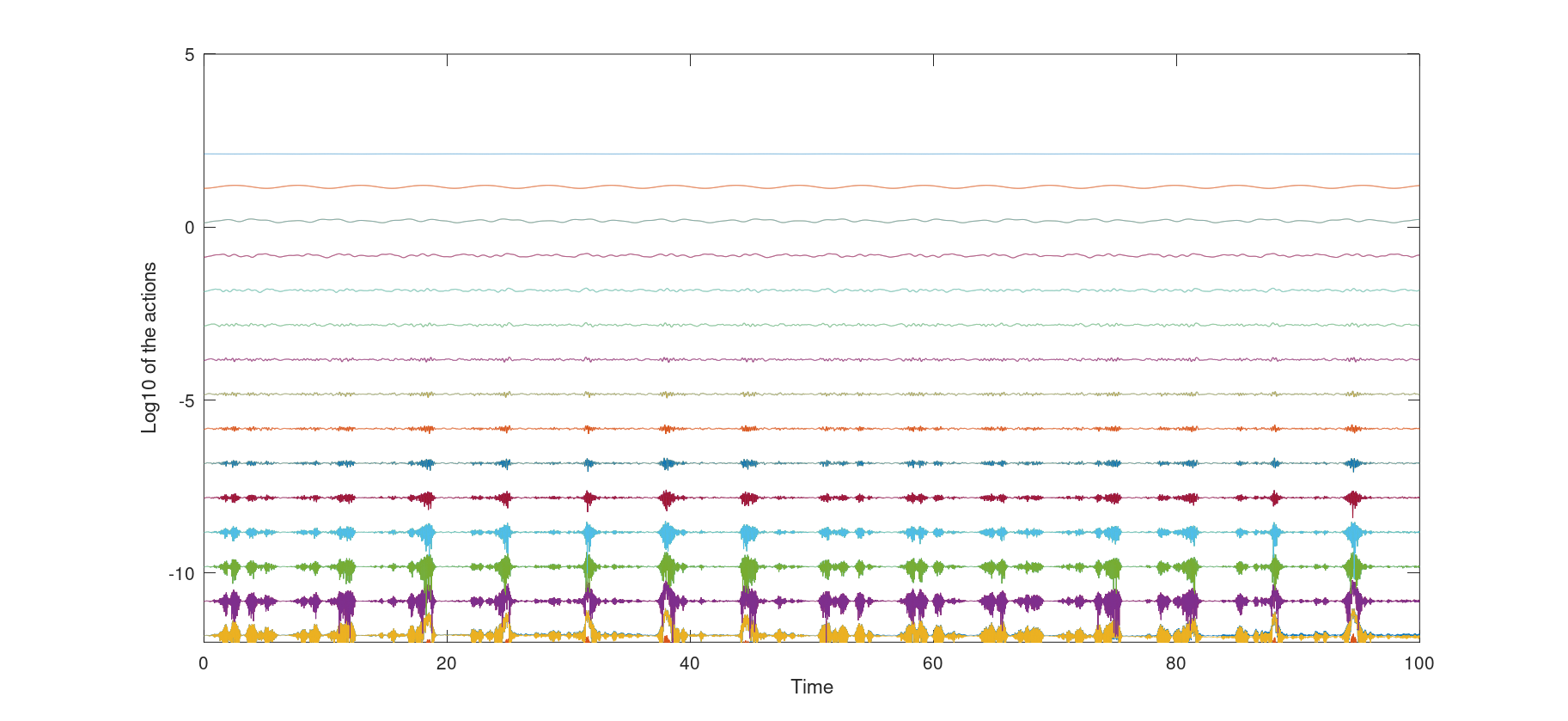}), $2$ (Figure \ref{fig: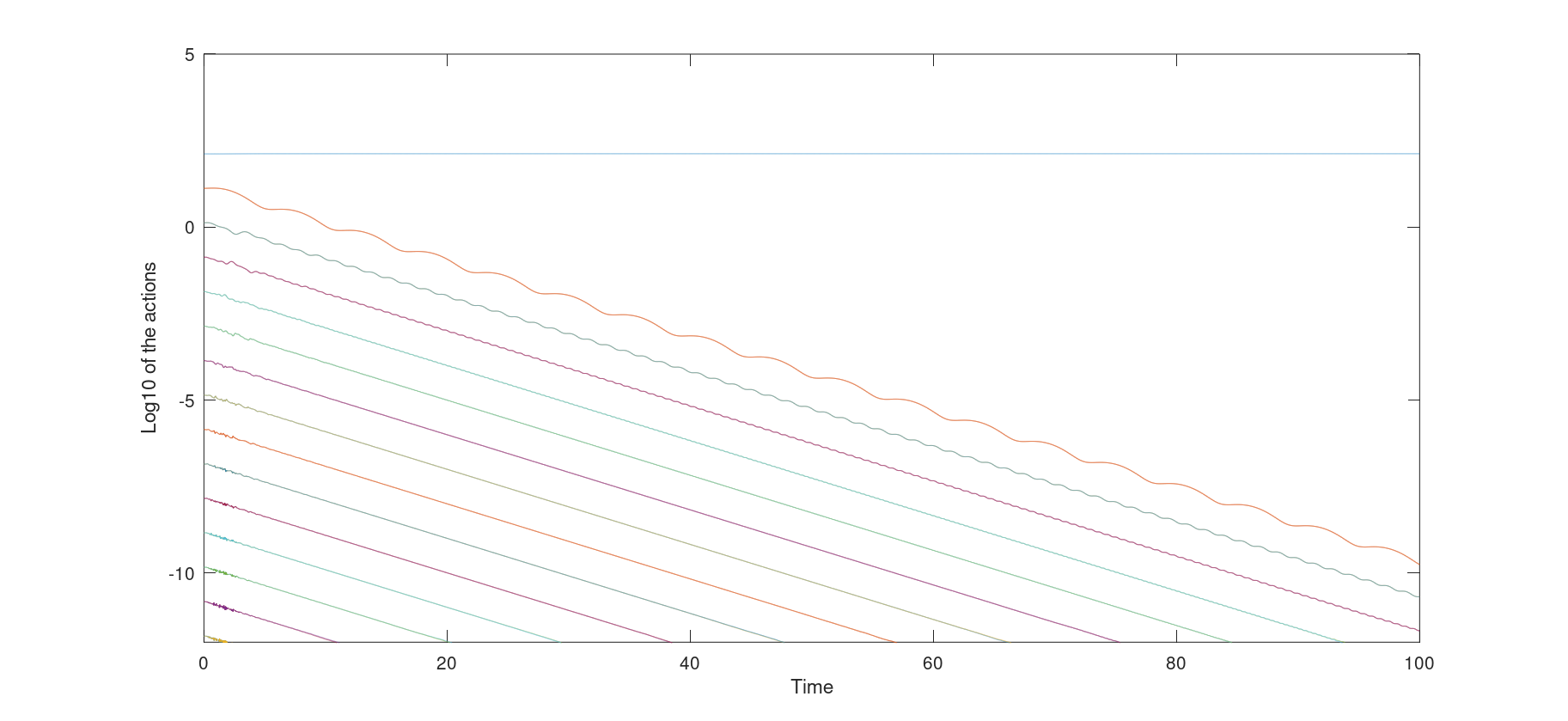}) and 8 (Figure \ref{fig:mu_8_actions.png}) in order to corroborate the state of Theorem \ref{theorem_plane_wave} and Theorem \ref{theorem_plane_wave_SL}. We then take a focusing constant $\lambda=-1$ with $\mu=2$ (Figure \ref{fig: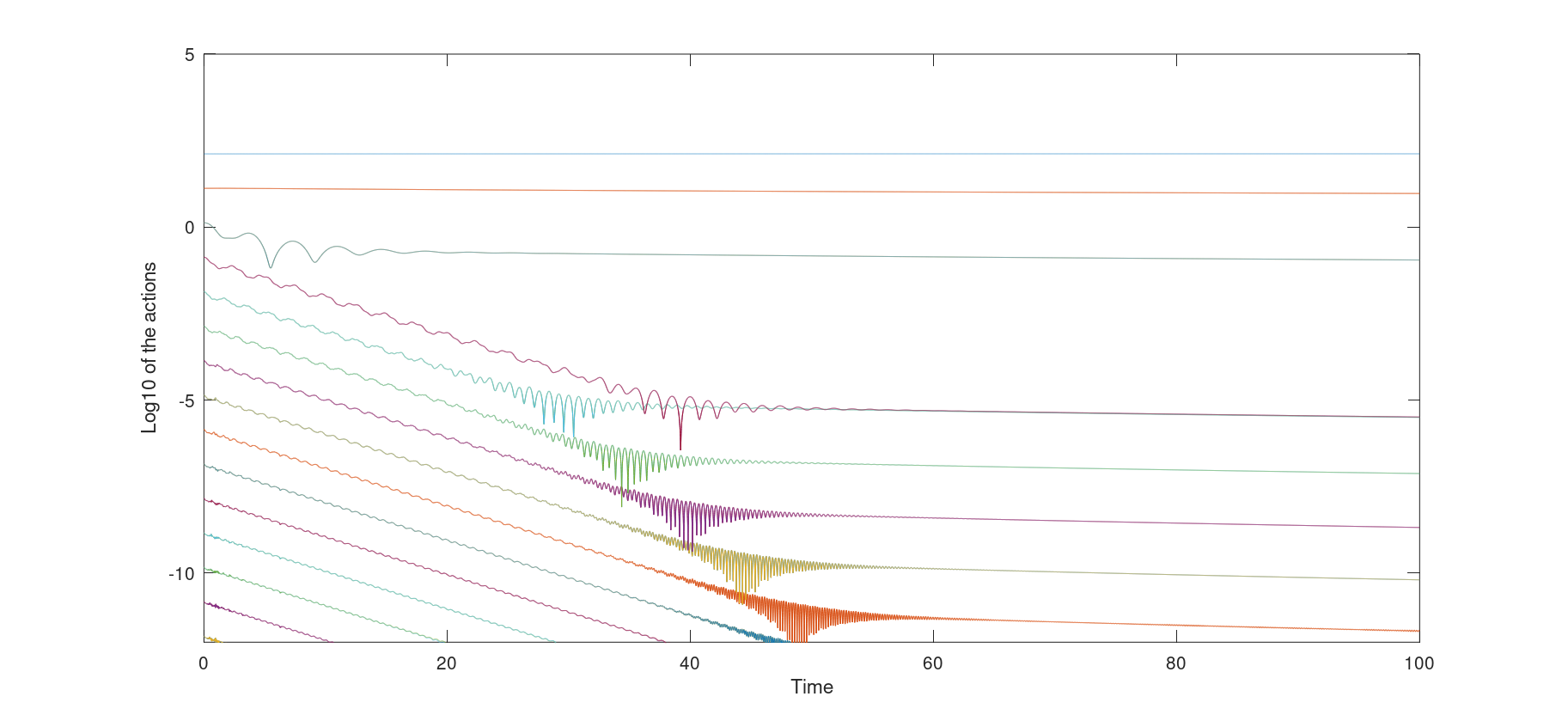}). \\

In Figure \ref{fig:mu_0_actions.png} we well observe the near-conservation of the actions of our stable solution. In Figure \ref{fig:mu_2_actions.png}, all the actions drop to zero at the same exponential rate, except the mode 0 which stays constant, as described in the proof of Theorem \ref{theorem_plane_wave_SL}. The actions still decays to zero in Figure \ref{fig:mu_8_actions.png}, but we observe that the first, second and third modes de not decrease at the same rate than the other ones, which corresponds to the cases $n=1$, $2$ and $3$ for the eigenvalues
\[ \alpha_n= \frac{\mu}{2} - \sqrt{\frac{\mu^2}{4} - n^2 - 2 \lambda n}, \]
and which confirms the analysis of Theorem \ref{theorem_plane_wave_SL}. Finally, the case of Figure \ref{fig:lambda_minus_1_mu_2_actions.png} which is not covered by our analys $(\lambda = -1)$ is way more exotic: the zero and first mode are conserved, and it also becomes true for the other modes after a transitory state where the actions decrease exponentially fast.

\begin{figure}[h]
	\centering
		\includegraphics[width=0.85\textwidth,trim = 0cm 1cm 0cm 1cm, clip]{mu_0_actions.png}	
	\caption{Evolution of the actions of solution of equation \eqref{SL_eq} with initial datum $\psi_0$ ($\lambda=0.5$, $\mu=0$).}
	\label{fig:mu_0_actions.png}
\end{figure}

\begin{figure}[h]
	\centering
		\includegraphics[width=0.85\textwidth,trim = 0cm 1cm 0cm 1cm, clip]{mu_2_actions.png}	
	\caption{Evolution of the actions of solution of equation \eqref{SL_eq} with initial datum $\psi_0$ ($\lambda=0.5$, $\mu=2$).}
	\label{fig:mu_2_actions.png}
\end{figure}

\begin{figure}[h]
	\centering
		\includegraphics[width=0.85\textwidth,trim = 0cm 1cm 0cm 1cm, clip]{mu_8_actions.png}	
	\caption{Evolution of the actions of solution of equation \eqref{SL_eq} with initial datum $\psi_0$ ($\lambda=0.5$, $\mu=8$).}
	\label{fig:mu_8_actions.png}
\end{figure}

\begin{figure}[h]
	\centering
		\includegraphics[width=0.85\textwidth,trim = 0cm 1cm 0cm 1cm, clip]{lambda_minus_1_mu_2_actions.png}	
	\caption{Evolution of the actions of solution of equation \eqref{SL_eq} with initial datum $\psi_0$ ($\lambda=-1$, $\mu=2$).}
	\label{fig:lambda_minus_1_mu_2_actions.png}
\end{figure}

\bibliographystyle{plain}
\bibliography{biblio}

\begin{thebibliography}{10}

\bibitem{ardila2016}
Alex~H. Ardila.
\newblock Orbital stability of {G}ausson solutions to logarithmic
  {S}chr\"{o}dinger equations.
\newblock {\em Electron. J. Differential Equations}, pages Paper No. 335, 9,
  2016.

\bibitem{bambusi2006}
D.~Bambusi and B.~Gr\'{e}bert.
\newblock Birkhoff normal form for partial differential equations with tame
  modulus.
\newblock {\em Duke Math. J.}, 135(3):507--567, 2006.

\bibitem{bambusi2003}
Dario Bambusi.
\newblock Birkhoff normal form for some nonlinear {PDE}s.
\newblock {\em Comm. Math. Phys.}, 234(2):253--285, 2003.

\bibitem{carles_bao_DF}
Weizhu Bao, R{\'e}mi Carles, Chunmei Su, and Qinglin Tang.
\newblock {Error estimates of a regularized finite difference method for the
  logarithmic Schr{\"o}dinger equation}.
\newblock {\em {SIAM Journal on Numerical Analysis}}, 57(2):657--680, 2019.

\bibitem{carles_bao_splitting}
Weizhu Bao, R{\'e}mi Carles, Chunmei Su, and Qinglin Tang.
\newblock {Regularized numerical methods for the logarithmic Schrodinger
  equation}.
\newblock {\em {Numerische Mathematik}}, 143(2):461--487, 2019.
\newblock 23 pages, 8 colored figures.

\bibitem{birula1976}
Iwo Bialynicki-Birula and Jerzy Mycielski.
\newblock Nonlinear wave mechanics.
\newblock {\em Annals of Physics}, 100(1):62 -- 93, 1976.

\bibitem{buljan2003}
H.~Buljan, A.~\v{S}iber, M.~Solja\v{c}i\'{c}, T.~Schwartz, M.~Segev, and D.~N.
  Christodoulides.
\newblock Incoherent white light solitons in logarithmically saturable
  noninstantaneous nonlinear media.
\newblock {\em Phys. Rev. E (3)}, 68(3):036607, 6, 2003.

\bibitem{carles2018}
R\'{e}mi Carles and Isabelle Gallagher.
\newblock Universal dynamics for the defocusing logarithmic {S}chr\"{o}dinger
  equation.
\newblock {\em Duke Math. J.}, 167(9):1761--1801, 2018.

\bibitem{cazenave1983}
Thierry Cazenave.
\newblock Stable solutions of the logarithmic schrödinger equation.
\newblock {\em Nonlinear Analysis: Theory, Methods and Applications},
  7(10):1127 -- 1140, 1983.

\bibitem{cazenave}
Thierry Cazenave.
\newblock {\em Semilinear {S}chr\"{o}dinger equations}, volume~10 of {\em
  Courant Lecture Notes in Mathematics}.
\newblock New York University, Courant Institute of Mathematical Sciences, New
  York; American Mathematical Society, Providence, RI, 2003.

\bibitem{chauleur2020}
Quentin Chauleur.
\newblock {Dynamics of the Schr{\"o}dinger-Langevin equation}.
\newblock {\em {Nonlinearity}}, 34(4):1943--1974, 2021.

\bibitem{chauleur2021global}
Quentin Chauleur.
\newblock Global dissipative solutions of the defocusing isothermal
  {E}uler-{L}angevin-{K}orteweg equation.
\newblock {\em Asymptotic Analysis}, vol. Pre-press:pp. 1--29, 2021.

\bibitem{chavanis2017}
Pierre-Henri Chavanis.
\newblock {Derivation of a generalized Schr{\"o}dinger equation from the theory
  of scale relativity}.
\newblock {\em {Eur.Phys.J.Plus}}, 132(6):286, 2017.

\bibitem{chavanis2019cosmo}
Pierre-Henri Chavanis.
\newblock Derivation of the core mass-halo mass relation of fermionic and
  bosonic dark matter halos from an effective thermodynamical model.
\newblock {\em Physical Review D}, 100(12), Dec 2019.

\bibitem{chavanis2019stat}
Pierre-Henri Chavanis.
\newblock {Generalized Euler, Smoluchowski and Schr{\"o}dinger equations
  admitting self-similar solutions with a Tsallis invariant profile}.
\newblock {\em {Eur.Phys.J.Plus}}, 134(7):353, 2019.

\bibitem{davenia2014}
Pietro d'Avenia, Eugenio Montefusco, and Marco Squassina.
\newblock On the logarithmic {S}chr\"{o}dinger equation.
\newblock {\em Commun. Contemp. Math.}, 16(2):1350032, 15, 2014.

\bibitem{martino2007}
Salvatore de~Martino, M.~Falanga, Cataldo Godano, and Giuliana Lauro.
\newblock Logarithmic {S}chrödinger-like equation as a model for magma
  transport.
\newblock {\em EPL (Europhysics Letters)}, 63:472, 01 2007.

\bibitem{faou2012}
Erwan Faou, Ludwig Gauckler, and Christian Lubich.
\newblock {Sobolev stability of plane wave solutions to the cubic nonlinear
  Schr{\"o}dinger equation on a torus}.
\newblock {\em {Communications in Partial Differential Equations}},
  38(7):1123--1140, 2013.

\bibitem{ferriere2019}
Guillaume Ferriere.
\newblock The focusing logarithmic {S}chr\"{o}dinger equation: analysis of
  breathers and nonlinear superposition.
\newblock {\em Discrete Contin. Dyn. Syst.}, 40(11):6247--6274, 2020.

\bibitem{ferriere2021multisoliton}
Guillaume Ferriere.
\newblock Existence of multi-solitons for the focusing logarithmic non-linear
  {S}chr\"{o}dinger equation.
\newblock {\em Ann. Inst. H. Poincar\'{e} Anal. Non Lin\'{e}aire},
  38(3):841--875, 2021.

\bibitem{GH2}
T.~Gallay and M.~Haragus.
\newblock Orbital stability of periodic waves for the nonlinear schr\"odinger
  equation.
\newblock {\em J. Dyn. Diff. Eqns.}, 19:825--865, 2007.

\bibitem{GH1}
T.~Gallay and M.~Haragus.
\newblock Stability of small periodic waves for the nonlinear schr\"odinger
  equation.
\newblock {\em J. Diff. Equations}, 234:544--581, 2007.

\bibitem{grebert2007}
Beno\^{\i}t Gr\'{e}bert.
\newblock Birkhoff normal form and {H}amiltonian {PDE}s.
\newblock In {\em Partial differential equations and applications}, volume~15
  of {\em S\'{e}min. Congr.}, pages 1--46. Soc. Math. France, Paris, 2007.

\bibitem{hansson2009}
T.~Hansson, Dan Anderson, and M.~Lisak.
\newblock Propagation of partially coherent solitons in saturable logarithmic
  media: A comparative analysis.
\newblock {\em Physical Review A}, 80:033819, 2009.

\bibitem{hefter1985}
Hefter.
\newblock Application of the nonlinear schr{\"o}dinger equation with a
  logarithmic inhomogeneous term to nuclear physics.
\newblock {\em Physical review. A, General physics}, 32 2:1201--1204, 1985.

\bibitem{herda2018}
Maxime Herda and Luis~Miguel Rodrigues.
\newblock Large-time behavior of solutions to
  {V}lasov-{P}oisson-{F}okker-{P}lanck equations: from evanescent collisions to
  diffusive limit.
\newblock {\em J. Stat. Phys.}, 170(5):895--931, 2018.

\bibitem{krolikowski2000}
Wies\l{}aw Kr\'olikowski, Darran Edmundson, and Ole Bang.
\newblock Unified model for partially coherent solitons in logarithmically
  nonlinear media.
\newblock {\em Phys. Rev. E}, 61:3122--3126, Mar 2000.

\bibitem{mousavi2019}
S.~V. Mousavi and S.~Miret-Artés.
\newblock On non-linear {S}chrödinger equations for open quantum systems.
\newblock {\em The European Physical Journal Plus}, 134(9), Sep 2019.

\bibitem{nassar1985}
A~B Nassar.
\newblock Fluid formulation of a generalised {S}chr{\"o}dinger-{L}angevin
  equation.
\newblock {\em Journal of Physics A: Mathematical and General},
  18(9):L509--L511, jun 1985.

\bibitem{nassar}
Antonio Nassar and Salvador Miret-Artés.
\newblock {\em Bohmian Mechanics, Open Quantum Systems and Continuous
  Measurements}.
\newblock 01 2017.

\bibitem{xu1997}
Junxiang Xu, Jiangong You, and Qingjiu Qiu.
\newblock Invariant tori for nearly integrable {H}amiltonian systems with
  degeneracy.
\newblock {\em Math. Z.}, 226(3):375--387, 1997.

\bibitem{zander}
C.~Zander, A.~R. Plastino, and J.~D\'{\i}az-Alonso.
\newblock Wave packet dynamics for a non-linear {S}chr\"{o}dinger equation
  describing continuous position measurements.
\newblock {\em Ann. Physics}, 362:36--56, 2015.

\bibitem{Zhidkov2001}
P.~Zhidkov.
\newblock {\em Korteweg-de Vries and nonlinear Schr\"odinger equations:
  qualitative theory}, volume 1756 of {\em Lecture Notes in Mathematics}.
\newblock Springer-Verlag, Berlin, 2001.

\end{thebibliography}

\end{document}